\renewcommand{\PrintDOI}[1]{\doi{#1}}
\newtheorem{theorem}{Theorem}
\newtheorem{lemma}[theorem]{Lemma}
\newtheorem{prop}[theorem]{Proposition}
\newtheorem{claim}[theorem]{Claim}
\newtheorem{conj}[theorem]{Conjecture}
\theoremstyle{definition}
\newtheorem{definition}[theorem]{Definition}
\DeclarePairedDelimiterXPP{\prob}[1]{\mathbb{P}}{(}
{)}{}{#1}
\DeclarePairedDelimiterXPP{\expec}[1]{\mathbb{E}}{\lbrack}
{\rbrack}{}{#1}
\let\polishlcross=\l
\def\l{\ifmmode\ell\else\polishlcross\fi}
\def\moverlay{\mathpalette\mov@rlay}
\def\mov@rlay#1#2{\leavevmode\vtop{    \baselineskip\z@skip\lineskiplimit-\maxdimen%
    \ialign{\hfil$\m@th#1##$\hfil\cr#2\crcr}}}
\newcommand{\charfusion}[3][\mathord]{
  #1{\ifx#1\mathop\vphantom{#2}\fi
    \mathpalette\mov@rlay{#2\cr#3}
  }
  \ifx#1\mathop\expandafter\displaylimits\fi}
\DeclareFontFamily{U}  {MnSymbolC}{}
\DeclareSymbolFont{MnSyC}         {U}  {MnSymbolC}{m}{n}
\DeclareFontShape{U}{MnSymbolC}{m}{n}{%
  <-6>  MnSymbolC5
  <6-7>  MnSymbolC6
  <7-8>  MnSymbolC7
  <8-9>  MnSymbolC8
  <9-10> MnSymbolC9
  <10-12> MnSymbolC10
  <12->   MnSymbolC12}{}
\DeclareMathSymbol{\powerset}{\mathord}{MnSyC}{180}
\def\namedlabel#1#2{\begingroup
  #2%
  \def\@currentlabel{#2}%
  \phantomsection\label{#1}\endgroup
}
\numberwithin{theorem}{section}
\renewcommand{\leq}{\leqslant}
\renewcommand{\geq}{\geqslant}
\renewcommand{\to}{\rightarrow}
\newcommand{\dvec}[1]{\accentset{\scalebox{.4}{\bm{$\overleftrightarrow\null$}}}{#1}}
\newcommand\dlr{\dvec{d}}
\newcommand\elr{\dvec{e}}
\newcommand{\ex}{\operatorname{ex}}
\let\epsilon\varepsilon%
\def\cC{\mathcal{C}}
\def\cD{\mathcal{D}}
\def\cS{\mathcal{S}}
\def\Ex{\mathbb{E}}
\def\N{\mathbb{N}}
\def\1{\mathbbm{1}}
\def\<{\langle}
\def\>{\rangle}
\def\circ{C^\circlearrowright}
\def\ex{\mathop{\text{\rm ex}}\nolimits}
\let\theta=\vartheta%
\let\rho=\varrho%
\let\phi=\varphi%
\begin{document}
\onehalfspace%
\shortdate%
\yyyymmdddate%
\settimeformat{ampmtime}
\footskip=28pt

\title{Counting orientations of random graphs\\ with no directed $k$-cycles}

\author[M.~Campos]{Marcelo Campos}

\author[M.~Collares]{Maur\'{\i}cio Collares}

\author[G.~O.~Mota]{Guilherme Oliveira Mota}

\address{Trinity College, Cambridge CB2 1TQ, United Kingdom}
\email{mc2482@cam.ac.uk}

\address{Institute of Discrete Mathematics, Graz University of Technology, Steyrergasse 30, 8010 Graz, Austria}
\email{mauricio@collares.org}

\address{Instituto de Matem\'atica e Estat\'{\i}stica, Universidade de
  S\~ao Paulo, S\~ao Paulo, Brazil}
\email{mota@ime.usp.br}

\thanks{M.
  Campos was supported by CNPq and FAPERJ\@.
  M. Collares was supported by CNPq (406248/2021-4).
  This research was funded in part by the Austrian Science Fund (FWF) P36131.
  For the purpose of open access, the author has applied a CC BY public copyright licence to any Author Accepted Manuscript version arising from this submission.
  G. O. Mota was supported by CNPq (306620/2020-0, 406248/2021-4) and FAPESP (2018/04876-1, 2019/13364-7).
  This study was financed in part by the Coordena\c{c}\~ao de Aperfei\c{c}oamento de Pessoal de N\'ivel Superior, Brazil (CAPES), Finance Code 001.
}

\begin{abstract}
  For every $k\geq 3$, we determine the order of growth, up to polylogarithmic factors, of the number of orientations of the binomial random graph containing no directed cycle of length~$k$.
  This solves a conjecture of Kohayakawa, Morris and the last two authors.
\end{abstract}

\maketitle
\vspace{-0.4cm}
\section{Introduction}

An orientation $\vec{H}$ of a graph $H$ is an oriented graph obtained
by assigning an orientation to each edge of $H$.  The study of the
number of $\vec{H}$-free orientations of a graph~$G$, denoted by
$D(G,\vec{H})$, was initiated by Erd\H{o}s~\cite{Er74}, who posed the
problem of determining $D(n,\vec H) := \max\big\{D(G,\vec H) : |V(G)|
=n \big\}$.  For tournaments, this problem was solved by Alon and
Yuster~\cite{AlYu06}, who proved that~$D{(n,\vec{T}) = 2^{\ex(n,K_k)}}$
holds for any tournament~$\vec{T}$ on $k$ vertices if $n \in \N$ is sufficiently large as a function of $k$.

Let $\circ_k$ denote the directed cycle of length $k$.
Buci\'c, Janzer and Sudakov~\cite{BuJaSu23} determined $D(n,\circ_{2\ell+1})$ for every $\ell\geq 1$ as long as $n$ is sufficiently large, extending the proof in~\cite{AlYu06}.
Another extension of the results in~\cite{AlYu06} was given by Ara\'ujo, Botler, and the last author~\cite{ArBoMo20+} who determined~$D(n,\circ_3)$ for every $n \in \mathbb{N}$ (see also~\cite{BoHoMo22}).

In the context of random graphs, Allen, Kohayakawa, Parente, and the last author~\cite{AlKoMoPa14} investigated the problem of determining the typical number of $\circ_{k}$-free orientations of the Erd\H{o}s--R\'enyi random graph~$G(n,p)$.
They proved that, for every $k\geq 3$, with high probability as $n\to\infty$ we have $\log_2 D(G(n,p), \circ_{k}) = o(pn^2)$ for $p \gg n^{-1 + 1/(k-1)}$, and $\log_2 D(G(n,p), \circ_{k}) = (1+o(1))p\binom{n}{2}$ for $n^{-2}\ll p \ll n^{-1 + 1/(k-1)}$.
This result was improved in the case of triangles by Kohayakawa, Morris and the last two authors~\cite{CoKoMoMo20}, who proved, among other things, the following result\footnote{The $\widetilde{\Theta}(\cdot)$ and $\widetilde{O}(\cdot)$ notation are analogous to $\Theta$ and $O$ notation but with polylogarithmic factors omitted.
For convenience, from now on $\log$ will denote the natural logarithm.}.

\begin{theorem}[\cite{CoKoMoMo20}*{Theorem~1.2}] \label{thm:triangles}
  If $p \gg n^{-1/2}$, then, with high probability as $n \to \infty$,
  \[
    \log D(G(n,p), \circ_{3}) = \widetilde{\Theta}\big(n / p \big)\nonumber.
  \]
\end{theorem}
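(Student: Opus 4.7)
The plan is to establish matching upper and lower bounds, each up to polylogarithmic factors.

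\textbf{Upper bound.} I would apply the hypergraph container method. Build a $3$-uniform auxiliary hypergraph $\cH=\cH(G)$ whose vertex set consists of the two oriented pairs $(u,v)$ and $(v,u)$ for every $uv\in E(G)$, and whose hyperedges are the two cyclic triangles supported on each triangle of $G$. A $\circ_3$-free orientation of $G$ is then precisely an independent set of $\cH$ that picks one of $\{(u,v),(v,u)\}$ for each edge. In $G(n,p)$ with $p\gg n^{-1/2}$, a typical oriented pair lies in $\Theta(np^2)$ hyperedges while any two oriented pairs lie in $O(1)$ hyperedges, so the degree/codegree hypotheses of the container lemma are easily verified. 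This yields a family of at most $\exp(\widetilde O(n/p))$ ``containers'' that cover every $\circ_3$-free orientation, and a separate entropy argument exploiting that each container pins most oriented pairs to a single choice would show that each container carries at most $\exp(\widetilde O(n/p))$ orientations, giving the upper bound.

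\textbf{Lower bound.} I would construct $\exp(\widetilde\Omega(n/p))$ distinct $\circ_3$-free orientations as follows. Define the ``conflict graph'' $\Gamma$ on vertex set $E(G)$ in which two edges are adjacent iff they share a triangle in $G$. A short calculation gives $|V(\Gamma)|=\Theta(pn^2)$ and $\Delta(\Gamma)=\widetilde O(np^2)$, so $\Gamma$ contains an independent set $F$ of size $\widetilde\Omega(n/p)$; by definition, no triangle of $G$ contains two edges of $F$. Next, orient $G\setminus F$ via a ``contract-and-orient'' template: contract each connected component of $F$ in $G$ to a single vertex, orient the resulting multigraph acyclically by a random vertex ordering, and lift back to $G\setminus F$. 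The resulting orientation $D_0$ is $\circ_3$-free on $G\setminus F$, and crucially satisfies the alignment property that, for every $uv\in F$ and every common neighbor $w$ of $u,v$ in $G$, the edges $uw,vw$ (both of which lie in $G\setminus F$ by the choice of $F$) end up oriented consistently with respect to $w$---either both towards $w$ or both away. Consequently every triangle through an $F$-edge is a source- or sink-triangle at its third vertex, so each edge of $F$ can be oriented in either direction without creating a directed triangle, yielding $2^{|F|}=2^{\widetilde\Omega(n/p)}$ valid orientations.

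\textbf{Main obstacle.} The hardest step is making the ``contract-and-orient'' construction honestly deliver the alignment property when the connected components of $F$ have more than two vertices: contraction then introduces self-loops, corresponding to edges of $G\setminus F$ whose endpoints lie in a single $F$-component, and there is no obvious single-edge rule orienting every such self-loop consistently with all of its triangle-partners against $F$. Handling this likely requires restricting $F$ to have further structure (e.g., large girth or bounded diameter inside each component, which costs only polylog factors in $|F|$) or a local alteration argument leveraging the triangle-supersaturation of $G(n,p)$. The symmetric difficulty on the upper-bound side is the inside-container entropy estimate: one must exploit the high local triangle density of $G(n,p)$ to argue that most oriented pairs in a container are genuinely forced by the rest, producing a true $\widetilde O(n/p)$ bound rather than the trivial $\widetilde O(pn^2)$ one.
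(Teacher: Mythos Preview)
This theorem is quoted from \cite{CoKoMoMo20}; the present paper does not reprove it but does sketch the upper bound in Section~\ref{subset:CoKoMoMo}. That sketch is quite different from your container approach: one removes vertices one at a time and observes that, for a minimal certificate $\vec{T}$ of a $\circ_3$-free orientation, the sets $N^+_{\vec{T}}(v)$ and $N^-_{\vec{T}}(v)$ must be independent in $G\setminus\{v\}$. Since $\alpha(G(n,p))=\widetilde O(1/p)$, this gives at most $\binom{n}{\le\alpha(G)}^{2n}=\exp(\widetilde O(n/p))$ orientations directly, with no container machinery and no ``inside-container entropy'' step.

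Your upper-bound route has a real gap at exactly the point you flag. The hypergraph container lemma applied to the $3$-uniform hypergraph of cyclic triangles produces containers missing only a $\delta$-fraction of the $2e(G)\approx pn^2$ oriented pairs; hence the number of edges $uv$ with \emph{both} $(u,v)$ and $(v,u)$ surviving in a container is still $\Theta(pn^2)$, not $\widetilde O(n/p)$. So the naive per-container count is $2^{\Theta(pn^2)}$, and iterating containers does not obviously help because the constraint ``pick exactly one of each antiparallel pair'' is not a hypergraph-independence condition. I do not see how to close this without importing an idea equivalent in strength to the independence-number argument above.

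Your lower-bound construction also has a gap that your proposed fixes do not repair. Any edge set $F\subset E(G)$ with $|F|=\widetilde\Omega(n/p)$ has total degree $\widetilde\Omega(n/p)>n$, so it \emph{cannot} be a matching, nor a disjoint union of stars, nor any graph whose components are induced (an induced forest has at most $n-1$ edges). Thus restricting $F$ to avoid the self-loop problem costs a full factor of $p$, not merely polylog. The actual construction in \cite{CoKoMoMo20} is vertex-based rather than edge-based: roughly, one processes vertices in order and, for each vertex $v$, exploits an independent set of size $\widetilde\Omega(1/p)$ inside its already-oriented back-neighbourhood to gain $\widetilde\Omega(1/p)$ bits of freedom, summing to $\widetilde\Omega(n/p)$ over all $v$. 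This mirrors the upper bound and generalises to $\circ_k$ (replacing ``independent set in $H$'' by ``independent set in $\vec H^{\,k-2}$''), which is why the paper can quote \eqref{lemma:lower_bound}.
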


For general $k$, one can show\footnote{Recall that Harary and Moser~\cite[Theorem 7]{HararyMoser1966} observed that every strongly connected tournament with $t$ vertices contains a cycle of length $i$ for every $i = 3, \ldots, t$, which implies that every strongly connected component of a $\circ_k$-free tournament has size at most $k-1$.
 Therefore, to count $\circ_k$-free tournaments, it suffices to consider all ordered partitions $V_1 \cup \cdots \cup V_p$ of $[n]$ with parts of size less than $k$ and count tournaments whose strongly connected components respect this partition and its order.
 There are at most $2^n \cdot n!$ ordered partitions of $[n]$ and at most $(k-2)n/2$ edges inside parts to orient.} that $D(K_n, \circ_k) \leq 2^{kn} \cdot n!$ for every $k \geq 3$. A first step towards determining~$\log D(G(n,p), \circ_{k})$ for $k\geq 4$ was also given in~\cite{CoKoMoMo20}, where it was proved that
\begin{equation}\label{eq:weak_upper_bound}
  \log D(G(n,p), \circ_{k}) = \widetilde{O}\big(n / p \big)
\end{equation}
with high probability.
Moreover, they proved that a natural generalisation of the lower bound construction used in the proof of Theorem~\ref{thm:triangles} gives
\begin{equation}\label{lemma:lower_bound}
  \log D(G(n, p), \circ_{k}) = \Omega\left(\frac{n}{p^{1/(k-2)}}\right)
\end{equation}
with high probability when $p \gg n^{-1+1/(k-1)}$.
They conjectured that \eqref{lemma:lower_bound} is sharp up to polylogarithmic factors, and we confirm this conjecture by proving the following result.

\begin{theorem}\label{thm:cycles}
  Let $k\geq 3$ and $p = p(n) \gg n^{-1+1/(k-1)}$.
  Then,  with high probability as $n \to \infty$,
  \[
    \log D\big(G(n,p), \circ_{k} \big) = \widetilde{\Theta}\bigg( \frac{n}{p^{1/(k-2)}} \bigg).
  \]
\end{theorem}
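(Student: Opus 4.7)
The lower bound $\log D(G(n,p),\circ_k) = \Omega(n/p^{1/(k-2)})$ is already supplied by \eqref{lemma:lower_bound}, so I focus on the matching upper bound. My plan is an entropy / random-ordering argument. Let $\vec G$ be a uniformly random $\circ_k$-free orientation of $G=G(n,p)$, so that $\log D = H(\vec G)$, and let $\pi$ be a uniformly random linear order on $E(G)$ independent of $\vec G$. Expanding the entropy along $\pi$ and using that $H(\vec e \mid \vec e_{<\pi(e)}, \pi)$ vanishes whenever $\vec e$ is forced by the previously revealed orientations, I obtain
\[
\log D(G,\circ_k) \;\leq\; \sum_{e \in E(G)} \Pr\bigl[\vec e \text{ is not forced by } \vec e_{<\pi(e)}\bigr],
\]
where $\vec e = uv$ is \emph{forced} exactly when $\vec e_{<\pi(e)}$ already contains a directed $P_{k-1}$ between $u$ and $v$ in either direction --- otherwise closing such a path with the wrong orientation of $\vec e$ would produce a $\circ_k$.

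The first main ingredient is a typical-structure lemma for $G(n,p)$: call an edge $e = uv$ \emph{good} if $G$ contains a family of $N_e \geq c\, n^{k-2} p^{k-1}$ internally vertex-disjoint copies of $P_{k-1}$ with endpoints $\{u,v\}$, for a small $c = c(k)>0$. Under $p \gg n^{-1+1/(k-1)}$, the expected $P_{k-1}$-count between any fixed pair of vertices is $\omega(1)$; Janson's inequality together with a dyadic decomposition of the lower tail should yield that, with high probability, all but $\widetilde O(n/p^{1/(k-2)})$ edges of $G$ are good. The bad edges contribute at most $\widetilde O(n/p^{1/(k-2)})$ to the sum via the trivial bound $\Pr[\vec e \text{ not forced}] \leq 1$, already matching the target.

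For a good edge $e = uv$ with its family $P_1,\dots,P_{N_e}$ of edge-disjoint $P_{k-1}$'s between $u$ and $v$, the probability over $\pi$ that all $k-1$ edges of $P_i$ precede $e$ is exactly $1/k$. If one can additionally show that in $\vec G$ each $P_i$ is \emph{monotone} (directed $u\to\cdots\to v$ or $v\to\cdots\to u$) with probability at least some $c_k > 0$, uniformly in $i$, then edge-disjointness of the $P_i$ together with a Chebyshev-type concentration for the number $M$ of monotone paths among $P_1,\dots,P_{N_e}$ should yield $\Pr[\vec e \text{ not forced}] \leq O(1/N_e) + \exp(-\Omega(N_e))$. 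Summing over good edges and plugging in $N_e \geq c\, n^{k-2} p^{k-1}$ gives a contribution of $\widetilde O(n^{4-k}/p^{k-2})$, which a short power comparison shows is at most $\widetilde O(n/p^{1/(k-2)})$ under $p \gg n^{-1+1/(k-1)}$.

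The crux --- and the main obstacle --- is the uniform monotonicity lower bound above. Since $\vec G$ is conditioned on being $\circ_k$-free, the marginal distribution of $\vec G$ restricted to a fixed $P_{k-1}$ is not uniform among its $2^{k-1}$ orientations, and a constant-probability lower bound on being monotone is not immediate. My plan is a local switching argument: for each non-monotone pattern $\sigma$ on $P_i$, construct an injection (with bounded multiplicity) from $\circ_k$-free orientations with $\vec G|_{P_i} = \sigma$ into those with $\vec G|_{P_i}$ monotone, flipping only $O(1)$ edges of $\vec G$ outside $P_i$. The validity of the switches is controlled by the codegree estimate that each edge of $G(n,p)$ typically lies in at most $\widetilde O((np)^{k-2})$ copies of $C_k$. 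Making this switching lemma precise, together with handling the correlations required for the Chebyshev step, is where the bulk of the technical work lies.
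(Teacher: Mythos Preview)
Your approach is genuinely different from the paper's. The paper never reasons about a uniformly random $\circ_k$-free orientation or about the probability that a fixed path is monotone. Instead, it introduces, for each $1\leq r\leq k-2$, an ``$r$-locally dense'' pseudorandomness condition on the number of directed $r$-paths between small sets, and splits the count according to the least $r$ for which the condition fails. Orientations that remain $(k-2)$-locally dense are handled by the Kleitman--Winston graph container lemma applied to the digraph $\vec H^{k-2}$; orientations that are $(r-1)$-locally dense but not $r$-locally dense are counted via an explicit fingerprint algorithm (Proposition~\ref{prop:container-like}). Crucially, the whole argument is carried out on the \emph{expected} number of $\circ_k$-free orientations, which lets the paper expose $G(n,p)$ vertex-block by vertex-block and condition on the part already revealed; this is precisely what allows the induction to go through without ever having to understand the conditional measure on $\circ_k$-free orientations of a fixed~$G$.

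Your proposal, by contrast, hinges on exactly such a statement about that measure: that a fixed copy $P_i$ of $P_{k-1}$ is monotone in a uniformly random $\circ_k$-free orientation $\vec G$ with probability bounded below by a constant $c_k>0$. You correctly flag this as the crux, but the switching you sketch does not close the gap. Flipping an edge of $P_i$ to make the path monotone can create one directed $k$-cycle for every copy of $C_k$ in $G$ through that edge, and in $G(n,p)$ this number is typically $\Theta(n^{k-2}p^{k-1})$, which is polynomial in $n$ once $p\gg n^{-1+1/(k-1)}$, not $O(1)$. Repairing each newly created $\circ_k$ by a further flip triggers the same cascade, so there is no reason to expect an $O(1)$-flip injection with bounded multiplicity. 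Even granting the marginal bound, your Chebyshev step needs a covariance bound for the events ``$P_i$ is monotone'' over edge-disjoint $P_i$, and edge-disjointness in $G$ says nothing about correlation under the $\circ_k$-free conditioning; without this, the $O(1/N_e)$ estimate for good edges is unsupported. The paper's expectation-based induction is designed precisely to avoid having to control correlations inside the set of $\circ_k$-free orientations of a fixed graph.
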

The proof of Theorem~\ref{thm:cycles} will be outlined in the next section, but we present here a short overview of three key ideas in the proof.
We will be interested in an auxiliary graph encoding $(k-2)$-paths, since we will be able to encode the orientation of the neighbourhood of each vertex as an independent set in this graph. One of the main challenges will be ensuring this auxiliary graph is dense enough so that we may apply the graph container lemma.
To do so, the first key idea is to define, for each $1
\leq r \leq k-2$, a pseudorandomness condition on the number of
directed $r$-paths between small sets, and split the proof into $k-2$
cases according to whether such a pseudorandomness condition holds for a given
$r$.  We design these conditions in such a way that the case $r=1$ holds for any orientation of $G(n,p)$ with high probability, and such that we may finish the proof using the graph container method (introduced by Kleitman and Winston~\cite{KlWi82}, and rediscovered and developed by Sapozhenko~\cite{Sa01}) if the condition holds for $r = k-2$. The second key idea is how to deal with orientations that do not satisfy the pseudorandomness condition for some $2 \leq r \leq k-2$.
In this situation, we provide a way to efficiently ``encode'' the orientations that do not satisfy the condition for some value of $r$ but do for all smaller values.

We remark that, to implement the above two ideas, we need to construct the orientation ``online'', that is, to consider a subgraph $H \subset G(n,p)$, and for each possible orientation $\vec{H}$ of $H$, reveal the randomness between a new subset of vertices and $V(H)$ and consider all ways to extend $\vec{H}$.
Counting orientations of $G(n,p)$ is, however, an ``offline'' problem: when exposing $G(n,p)$ in multiple rounds, the orientation of an early-round edge may depend on the randomness of later rounds.
To circumvent this fact, we use our third key idea, which is to count the expected number of orientations of $G(n,p)$ that are $\circ_{k}$-free.
When estimating this expectation we will be able to split it in a way that makes possible to proceed inductively and use the randomness in steps after orienting part of the edges.

\section{Outline of the proof of Theorem~\ref{thm:cycles}}\label{sec:outline}

We start by presenting a short sketch of the proof of
Theorem~\ref{thm:cycles} for $\circ_3$, as proved
in~\cite{CoKoMoMo20}. This will motivate the idea behind the proof for
general directed cycles $\circ_k$.

\subsection{The proof in~\cite{CoKoMoMo20}}\label{subset:CoKoMoMo}
The idea is to obtain, by induction on the number of vertices, the
following upper bound on the number of $\circ_3$-free orientations of an
$n$-vertex graph~$G$:
\begin{equation}
  \label{eq:alphaG}
\binom{n}{\leq \alpha(G)}^{2n},
\end{equation}
where $\alpha(G)$ denotes the independence number of $G$, and $\binom{n}{\leq t}$ is shorthand for $\sum_{i=0}^t \binom{n}{i}$.
In order to obtain such a bound, let $G$ be a graph on vertex set $V$ and consider $v\in V$.
Let $H=G \setminus \{v\}$ and suppose that the number of $\circ_3$-free orientations of $H$ is
\begin{equation}
  \label{eq:alphaH}
\binom{n-1}{\leq \alpha(H)}^{2(n-1)}.
\end{equation}
Then, for each $\circ_3$-free orientation $\vec{G}$ of $G$, let $\vec{H}$ be its restriction to $H$ and pick $\vec{T}\subset E(\vec{G})\setminus E(\vec{H})$ minimal such that $\vec{G}$ is the only $\circ_3$-free orientation of $G$ containing $\vec{T}\cup \vec{H}$.
The key observation is that, by the minimality of $T$, the vertex sets $N^+_{\vec{T}}(v)$ and $N^-_{\vec{T}}(v)$ are independent sets in $H$, so there are at most $\binom{n}{\leq \alpha(G)}^2$ choices for $\vec{T}$.
This together with~\eqref{eq:alphaH} and $\alpha(H)\leq \alpha(G)$ completes the proof of~\eqref{eq:alphaG}.
One may check that the expected number of independent sets of size $(3\log n)/p$ in $G(n, p)$ tends to zero with $n$, and therefore \eqref{eq:alphaG} and the first moment method imply Theorem~\ref{thm:cycles} for $\circ_3$.

We will generalise the ideas depicted above in two ways, which will be
described in the next two subsections.  Directed paths of length
${k-2}$ starting or ending in the neighbourhood of a vertex will play
a key role in our proofs.  For this reason, we fix $\ell = k-2 \geq
1$, and our aim is to count orientations of $G(n,p)$ avoiding copies
of $\circ_{\ell+2}$.

\subsection{Pseudorandomness}
To count the desired orientations of $G(n,p)$, we define a
``pseudorandom'' oriented graph property, and we proceed to separately
count the $\circ_{\ell+2}$-free orientations depending on whether some
already-oriented subgraph $\vec{H}$ is pseudorandom.  We then use the
randomness of $G(n,p)$ in a stronger way, exposing the randomness bit
by bit in each step of the induction.

Let us discuss the pseudorandom property we mentioned in the previous paragraph.
We write $\vec{P}_r$ for the directed path with $r$ edges and, given an oriented graph $\vec{G}$, we denote by $\vec{G}^r$ the digraph such that $(u, v)$ is an edge whenever there is a $\vec{P}_r$ from $u$ to $v$ in $\vec{G}$.
Roughly speaking, we say an oriented graph $\vec{G}$ is \emph{$r$-locally dense} (for a complete description of this property, see Definition~\ref{def:locally_dense}), if
\begin{equation}\label{eq:roughLocally}
  \elr_{(\vec{G} \setminus X)^r}(A',B)\geq \frac{1}{2} \cdot p^{\frac{\ell-r+1}{\ell}}|A'||B|
\end{equation}
for all disjoint sets $A',B,X\subset V(G)$ of size roughly $(\log n)/p$, where $\vec{G} \setminus X$ is a shorthand for $\vec{G}[V(G) \setminus X]$ and $\elr_{(\vec{G} \setminus X)^r}(A',B)$ denotes the number of edges between $A'$ and $B$ (in either direction) in the digraph $(\vec{G} \setminus X)^r$.

Observe that being $1$-locally dense does not depend on the orientation of the graph (and the set $X$ plays no role in this case), i.e., being $1$-locally dense is a pseudorandom property that depends only on the underlying graph $G$.
By Chernoff's inequality, any orientation of $G(n,p)$ is $1$-locally dense with high probability, and one may think of this property as a strengthening of the property that $\alpha(G(n,p))\leq (3\log n)/p$.

\subsection{Sketch of the proof}

We will count separately the orientations which are
$\ell$-locally dense and the orientations which are not $r$-locally
dense but are $(r-1)$-locally dense for some $2 \leq r \leq \ell$.
To count the $\circ_{\ell+2}$-free, $\ell$-locally dense orientations $\vec{G}$ (see
Lemma~\ref{lemma:key}~\eqref{lemma:locally-dense}), we proceed
similarly to the proof in~\cite{CoKoMoMo20}: let $v\in V$, put $H=G
\setminus\{v\}$, and let $\vec{H}$ be the restriction of $\vec{G}$ to $H$.
Moreover, let $\vec{T}\subset E(\vec{G})\setminus E(\vec{H})$
be minimal such that $\vec{G}$ is the only $\circ_{\ell+2}$-free
orientation of $G$ containing $\vec{T}\cup \vec{H}$.  Note that, since
we are avoiding copies of $\circ_{\ell+2}$, the sets
$T^+:=N^+_{\vec{T}}(v)$ and $T^-:=N^-_{\vec{T}}(v)$ are independent
sets in $\vec{H}^\ell$ by minimality of $\vec{T}$.  Since $\vec{G}$ is
$\ell$-locally dense, the edge density of $\vec{H}^\ell$ is at least
of order $p^{1/\ell}$.  This allows us to prove that the largest
independent set in $N_G(v)$ has size roughly $(\log n)/ p^{1/\ell}$,
which will be enough to finish the proof of this case using the graph
container lemma.

To count orientations $\vec{G}$ that are not $r$-locally dense but are
$(r-1)$-locally dense (see
Lemma~\ref{lemma:key}~\eqref{lemma:locally-sparse}), we use the fact
that there are ``large'' disjoint sets $A',B\subset V$ and $A\subset A'$,
with $|A|\geq |A'|/2$, such that for every $a\in A$ it holds that
$\dlr_{\vec{G}^r}(a,B) < p^{\frac{\ell-r+1}{\ell}}|B|$, where
$\dlr(a,B)$ denotes $d^+(a,B) + d^-(a,B)$ (for simplicity, in this
outline we assume the set $X$ in \eqref{eq:roughLocally} is empty).
Put $\vec{H}=\vec{G} \setminus A$ and note that, since $\vec{G}$ is
$(r-1)$-locally dense, $\vec{H}^{r-1}$ has ``many'' edges between any
two ``sufficiently large'' sets\footnote{Note that we want $\vec{H}^{r-1}=(\vec{G} \setminus A)^{r-1}$ to have many
  edges between pairs of sets.
This does not directly follow from $\vec{G}$ being $(r-1)$-locally-dense, because many $(r-1)$-paths could pass through $A$.
The set $X$ in the definition of $r$-locally-dense graphs is used to handle this issue.}.

Now given $a\in A$ we may choose $S^+\subset V(H)\cap N_{\vec{G}}^+(a)$, the set of all $v\in N_{\vec{G}}^+(a)$ such that \[ d^+_{\vec{H}^{r-1}}(v,B)\geq \frac{1}{4} \cdot p^{\frac{\ell-r+2}{\ell}}|B|\] and $S^-\subset V(H)\cap N_{\vec{G}}^-(a)$, the set of all $v\in N_{\vec{G}}^-(a)$ such that \[d^-_{\vec{H}^{r-1}}(v,B) >\frac{1}{4} \cdot p^{\frac{\ell-r+2}{\ell}}|B|.\] We claim that $|S^+| + |S^-|\leq 4p^{-1/\ell}$.
Indeed, every $v \in S^+ \cup S^-$ corresponds to at least $p^{\frac{\ell-r+2}{\ell}}|B|/4$ paths of length $r$ starting with the edge $av$ or ending with the edge $va$, so if $|S^+| + |S^-| > 4p^{-1/\ell}$ then $\dlr_{\vec{G}^r}(a,B)$ would be too large.
Moreover, we are able to use the randomness of $G$ to show that $S^+$ and $S^-$ determine the orientation of all but a very small number of edges of $G$ between~$a$ and~$V(H)$.

\section{The main result}\label{sec:forb-direct-cycl}

In this section our goal is to count orientations of $G(n,p)$ containing no copies of $\circ_{\ell+2}$.
We prove the following result, which generalises the upper bound of Theorem~\ref{thm:cycles} to the case where $\ell$ is a function of $n$ and provides an explicit bound on the number of orientations.

\begin{theorem}\label{thm:main}
 Let $\ell = \ell(n)\in \N$ and $p = p(n)$ be such that $0<p\leq (2^8 \ell)^{-\ell}$. With high probability as $n \to \infty$, $G(n, p)$ admits at most \[\exp\left(\frac{13\ell n(\log n)^2}{p^{1/\ell}}\right)\] $\circ_{\ell+2}$-free orientations.
\end{theorem}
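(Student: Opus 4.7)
The strategy is to perform induction on $n$ and bound the \emph{expected} number of $\circ_{\ell+2}$-free orientations of $G(n,p)$, exposing the randomness vertex by vertex to sidestep the ``offline'' nature of the problem. Writing $N(G)$ for the number of $\circ_{\ell+2}$-free orientations of a graph $G$, I would aim for an inductive bound of the form $\Ex[N(G(n,p))] \leq \exp\!\big(13\ell n(\log n)^2/p^{1/\ell}\big)$ and then conclude with Markov's inequality. At step $n$, I pick a fresh vertex $v$, let $H = G \setminus \{v\}$, and associate to every $\circ_{\ell+2}$-free orientation $\vec{G}$ its restriction $\vec{H}$. The count is then split according to whether (i) $\vec{H}$ is $\ell$-locally dense, or (ii) $\vec{H}$ is $(r-1)$-locally dense but not $r$-locally dense for some $2 \leq r \leq \ell$. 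Since $1$-local density is a property of the underlying graph alone and holds for $G(n,p)$ with high probability by Chernoff, the residual case where even that fails contributes negligibly and can be discarded by a union bound.

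\textbf{Case (i).} Mirroring the $\circ_3$ argument of~\cite{CoKoMoMo20}, I pick a minimal $\vec{T} \subseteq E(\vec{G}) \setminus E(\vec{H})$ such that $\vec{G}$ is the unique $\circ_{\ell+2}$-free extension of $\vec{T} \cup \vec{H}$. A $\vec{P}_\ell$ in $\vec{H}$ between two vertices of $T^+ := N^+_{\vec{T}}(v)$ would close a $\circ_{\ell+2}$ through $v$, so minimality forces both $T^+$ and $T^- := N^-_{\vec{T}}(v)$ to be independent sets in $\vec{H}^\ell$. Because $\vec{H}$ is $\ell$-locally dense, condition~\eqref{eq:roughLocally} at $r = \ell$ implies that $\vec{H}^\ell$ has edge density at least $p/2$ between any two large sets; the Kleitman--Winston/Sapozhenko container lemma then produces only $\exp\big(\widetilde{O}(1/p^{1/\ell})\big)$ choices for each of $T^\pm$. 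The edges of $E(\vec{G}) \setminus (E(\vec{H}) \cup \vec{T})$ each contribute a factor of $2$, and taking expectation over the random set of edges at $v$ yields a multiplicative factor that is absorbed by the increment $\exp\!\big(13\ell(\log n)^2/p^{1/\ell}\big)$ in the inductive bound.

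\textbf{Case (ii).} This is the subtler regime. The failure of $r$-local density furnishes (using $X$ as a buffer) disjoint sets $A', B$ of size $\approx (\log n)/p$ together with $A \subseteq A'$, $|A| \geq |A'|/2$, such that $\dlr_{(\vec{G} \setminus X)^r}(a, B) < p^{(\ell-r+1)/\ell}|B|$ for every $a \in A$. Setting $\vec{H} = \vec{G} \setminus A$ and using the $(r-1)$-local density of $\vec{G}$ (with $X$ swallowing the paths that would otherwise traverse $A$), the digraph $\vec{H}^{r-1}$ remains dense between $B$ and any large set. For each $a \in A$ I define $S^+ \subseteq V(H) \cap N_{\vec{G}}^+(a)$ as those $v$ with $d^+_{\vec{H}^{r-1}}(v, B) \geq p^{(\ell-r+2)/\ell}|B|/4$, and $S^-$ analogously for in-degrees. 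A double-counting argument against the sparsity of $a$ in $\vec{G}^r$ yields $|S^+| + |S^-| \leq 4p^{-1/\ell}$. Independence between the edges at $a$ and the randomness already revealed then lets me argue that, aside from a handful of ``exceptional'' edges whose number is controlled by the sparsity of $G(n,p)$, the orientation of $E(a, V(H))$ is forced by the pair $(S^+, S^-)$. Encoding the tuple $(A, A', B, X, (S^\pm_a)_{a \in A})$ together with the exceptional edges costs $\exp\!\big(\widetilde{O}(|A|/p^{1/\ell})\big)$, and the gain over case (i) comes from amortising the argument over all of $A$ in one step of the induction.

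\textbf{Main obstacle.} The bulk of the difficulty is in case (ii): one must sequence the revelation of randomness so that, at the moment each $a \in A$ is processed, the edges of $G$ incident to $a$ are still independent of all data used to define $(S^+_a, S^-_a)$, while simultaneously keeping the exceptional-edge count negligible enough not to destroy the $p^{1/\ell}$ gain. Balancing the $\ell$ intertwined pseudorandomness thresholds (one for each $1 \leq r \leq \ell$), ensuring that the ``buffer'' $X$ in the definition of $r$-local density is large enough to make the argument go through but small enough not to damage $(r-1)$-local density, and tuning the constants to close the induction with the prefactor $13\ell$, are where the most delicate bookkeeping takes place. Once the expectation bound is established, Markov's inequality converts it into the high-probability statement of Theorem~\ref{thm:main}.
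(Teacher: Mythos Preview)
Your proposal has all the right ingredients --- the $r$-locally-dense hierarchy, the independent-set-in-$\vec{H}^\ell$ argument via containers for the dense case, and the $S^\pm$ encoding for the sparse case --- but the induction as you lay it out does not close. You declare the inductive step to be ``remove a single vertex $v$ and set $\vec{H} = \vec{G}\setminus\{v\}$'', yet in Case~(ii) you silently switch to $\vec{H} = \vec{G}\setminus A$ for a set $A$ of size $\approx \alpha/2$ determined by the sparse witness, and speak of ``amortising over all of $A$ in one step of the induction''. These are two different step sizes. The encoding argument in Case~(ii) bounds the number of ways to orient the edges between $A$ and $V(G)\setminus A$; it says nothing about the edges at your chosen vertex $v$, and there is no reason $v$ should lie in $A$. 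So as written, Case~(ii) gives no bound on the number of extensions of $\vec{G}\setminus\{v\}$, and the induction cannot advance.

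The paper fixes this by making the inductive step \emph{uniformly} remove a set $A$ of size $\alpha/2$. For $\cD_\ell(G)$ any such $A$ works, so one fixes $A$ arbitrarily and runs your Case~(i) argument for each $a\in A$ in parallel (this is Lemma~\ref{lemma:key}\eqref{lemma:locally-dense}); for $\cD_{r-1}(G)\cap\cS_r(G)$ one instead sums over \emph{all} possible witnessing triples $(A,B,X)$ --- a cost of at most $n^{(2\ell+1)\alpha}$, which is absorbed --- and applies the encoding bound (Lemma~\ref{lemma:key}\eqref{lemma:locally-sparse}) for each. Both pieces are then compared to $\Ex\big[|\cD_1(G(n'-\alpha/2,p))|\big]$ via the same inductive hypothesis. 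Two smaller points: the case split should be on the local-density profile of $\vec{G}$ rather than of $\vec{H}$ (the buffer $X$ in Definition~\ref{def:locally_dense} is precisely what lets $\ell$-local-density of $\vec{G}$ transfer to density of $(\vec{G}\setminus A)^\ell$); and at $r=\ell$ the density guaranteed by~\eqref{eq:roughLocally} is $\tfrac12 p^{1/\ell}$, not $p/2$.
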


We remark that, since~\eqref{eq:weak_upper_bound} implies the upper bound in Theorem~\ref{thm:cycles} when $p$ and $\ell$ are constant, proving Theorem~\ref{thm:main} indeed suffices to complete the proof of Theorem~\ref{thm:cycles} despite the extra condition $p \leq (2^8 \ell)^{-\ell}$.
The upper bound on $p$ will be needed in the proof of Lemma~\ref{lemma:key}(\ref{lemma:locally-sparse}), more specifically in Claim~\ref{claim:maxdeg_T}.
We present the proof of Theorem~\ref{thm:main} (assuming the validity of Lemma~\ref{lemma:key}, which will be proved in Sections~\ref{sec:locally-sparse} and~\ref{sec:locally-dense}) at the end of the section.

For any oriented graph $\vec{G}$, its underlying undirected graph will be denoted by $G$.
We recall other useful notation introduced in Section~\ref{sec:outline}: Given a graph $\vec{G}$, the digraph $\vec{G}^r$ contains the edge $(u, v)$ precisely whenever there is an oriented path of length $r$ starting at $u$ and ending at $v$ in $\vec{G}$.
Moreover, $\vec{G} \setminus X$ is a shorthand for $\vec{G}[V(G) \setminus X]$, and, for disjoint sets $A, B \subset V(G)$, $\elr_{\vec{G}}(A,B)$ denotes the number of edges between $A$ and $B$ (in either direction) in $\vec{G}$.

The following definition will be used to ``encode'' orientations of $\circ_{\ell+2}$-free graphs.
It implicitly depends on a parameter $\alpha$ which will be chosen later.

\begin{definition}\label{def:locally_dense}
  Given $1 \leq r\leq \ell$, an oriented graph $\vec{G}$ is \emph{$r$-locally dense} if for every pairwise disjoint sets $A'$, $B$, $X$ of $V(G)$ such that
\begin{equation}\label{eq:T_r}
|A'| = \alpha,\qquad r\alpha\leq |B|\leq \ell\alpha,\qquad \text{and} \qquad |X|\leq (\ell+1-r) \alpha,
\end{equation}
we have
\begin{equation}\label{eq:locally_dense}
  \elr_{(\vec{G} \setminus X)^r}(A',B)\geq \frac{1}{2} \cdot p^{\frac{\ell-r+1}{\ell}}|A'||B|.
\end{equation}
Otherwise, the orientation $\vec{G}$ is called \emph{$r$-locally-sparse}.
\end{definition}

Even though the following lemma is a trivial application of Chernoff's inequality together with the fact that the definition of $1$-locally-dense depends solely on the underlying undirected graph and not on the orientation of the edges, it will be crucial.

\begin{lemma}\label{lemma:1-locally-dense}
  With high probability every orientation of $G(n, p)$ is $1$-locally-dense for $\alpha = 2^6 (\log n)/p$.
\end{lemma}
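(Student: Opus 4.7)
The key observation is that, when $r=1$, the $1$-locally-dense property depends only on the underlying graph $G$ and not on its orientation. Indeed, $\vec{G}^{1}=\vec{G}$; the quantity $\elr$ counts edges in either direction, so each edge of $G$ between $A'$ and $B$ contributes exactly once to $\elr_{\vec{G}}(A',B)$; and since $X$ is disjoint from $A'\cup B$, removing the vertices of $X$ does not affect the edges counted. Hence $\elr_{(\vec{G}\setminus X)^{1}}(A',B) = e_G(A',B)$, independently of both the orientation and the set $X$. Lemma~\ref{lemma:1-locally-dense} is therefore equivalent to the undirected statement that, with high probability, every pair of disjoint sets $A', B \subset [n]$ with $|A'| = \alpha$ and $\alpha \leq |B| \leq \ell\alpha$ satisfies $e_{G(n,p)}(A',B) \geq p|A'||B|/2$.

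The remainder is a standard Chernoff-plus-union-bound argument. For a fixed such pair with $|B|=s$, the random variable $e_{G(n,p)}(A',B)$ has distribution $\Bin(\alpha s, p)$, so the lower-tail Chernoff inequality yields
\[
\Pr\!\left[e_{G(n,p)}(A',B) < \frac{p\alpha s}{2}\right] \le \exp(-p\alpha s/8) = n^{-8s},
\]
where in the final equality we have used $\alpha = 2^{6}(\log n)/p$. Since the number of ordered disjoint pairs $(A',B)$ with $|A'|=\alpha$ and $|B|=s$ is bounded by $\binom{n}{\alpha}\binom{n}{s}\le n^{\alpha+s}$, a union bound over these pairs and over the $O(\ell\alpha)$ admissible values of $s$ bounds the total failure probability by
\[
O(\ell\alpha)\cdot n^{\alpha - 7s} \le O(\ell\alpha)\cdot n^{-6\alpha} = o(1),
\]
the last estimate being valid because $\alpha \log n = 2^{6}(\log n)^{2}/p \to \infty$.

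There is no substantive obstacle here; as is already hinted in the outline preceding the lemma, once one observes that both the directedness and the set $X$ drop out of the definition when $r=1$, the statement is a textbook concentration estimate, and the constants built into $\alpha$ leave ample room for the union bound to close.
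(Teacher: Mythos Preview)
Your argument is correct and follows exactly the approach the paper indicates: observe that for $r=1$ the condition reduces to an undirected edge-count (independent of both the orientation and of $X$), and then apply Chernoff with a union bound, where the constant $2^6$ in $\alpha$ gives enough room. The paper does not spell out the details, merely calling it ``a trivial application of Chernoff's inequality together with the fact that the definition of $1$-locally-dense depends solely on the underlying undirected graph''; your write-up supplies those details faithfully (the display $O(\ell\alpha)\cdot n^{\alpha-7s}$ is slightly informal since $s$ is being summed over, but the intended bound $n^{\alpha-7s}\le n^{-6\alpha}$ for each $s\ge\alpha$ is clear).
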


Given a graph $G$ and an induced subgraph $H$ of $G$, an orientation $\vec{G}$ of $G$ is an \emph{extension} of an orientation $\vec{H}$ of $H$ if $E(\vec{H}) \subset E(\vec{G})$.
Furthermore, we say that $\vec{G}$ \emph{extends} $\vec{H}$.
Due to Lemma~\ref{lemma:1-locally-dense}, in the rest of the paper we count $1$-locally-dense $\circ_{\ell+2}$-free orientations.

\begin{definition}\label{def:D}
  Let $G=(V,E)$ be a graph and let $\vec{H}$ be an orientation of an induced subgraph $H$ of $G$.
  We denote by $\cD_r(G,\vec{H})$ (resp.\ $\cS_r(G, \vec{H})$) the set of all $1$-locally-dense, $\circ_{\ell+2}$-free orientations of $G$ that extend $\vec{H}$ and are $r$-locally-dense (resp.\ $r$-locally-sparse).
  For convenience, we also write $\cD_r(G)$ for $\cD_r(G, \emptyset)$ and $\cS_r(G)$ for $\cS_r(G, \emptyset)$.
\end{definition}

In view of Lemma~\ref{lemma:1-locally-dense} and using the language described in Definition~\ref{def:D}, our goal is to prove that $|\cD_1(G(n,p))| \leq \exp\big(13\ell n(\log n)^2/p^{1/\ell}\big)$ holds with high probability.
The rest of the paper will be dedicated to this task, and from now on $\ell$, $n$ and $p$ will be fixed, all graphs will have vertex set contained in $[n]$, and we set
\[
\alpha := 2^6(\log n)/p.
\]

Note that if $\vec{G} \in \cS_r(G)$, then there exist pairwise
disjoint sets $A'$, $B$ and $X$ of $V(G)$ satisfying~\eqref{eq:T_r}
such that~\eqref{eq:locally_dense} fails to hold.  This fact implies
the existence of $A \subset A'$ with $|A| = |A'|/2$ such that
$\dlr_{(\vec{G} \setminus X)^r}(a,B) \leq
p^{\frac{\ell-r+1}{\ell}}|B|$ for every $a \in A$.  This motivates
the following definition, where $H$ will play the role of $G\setminus
A$.

 \begin{definition}\label{def:witness}
  Let $H$ be a graph, $A\subset [n]\setminus V(H)$ and let $B$ and $X$
  be disjoint subsets of $V(H)$.  The quadruple $(A, H, B, X)$ is an
   \emph{$r$-frame} if
   \[
     |A| = \alpha/2,\qquad r\alpha\leq |B|\leq \ell\alpha,\qquad \text{and} \qquad |X|\leq (\ell+1-r) \alpha.
   \]
  Given an $r$-frame $(A, H, B, X)$, an orientation $\vec{H}$ of $H$, and a graph $G$ with $A=V(G)\setminus V(H)$,
  an orientation $\vec{G}$ of $G$ is said to be an \emph{$(r, B, X)$-sparse
    extension} of $\vec{H}$ if it holds that $\dlr_{(\vec{G} \setminus X)^r}(a,B)
  \leq p^{\frac{\ell-r+1}{\ell}}|B|$ for every $a\in A$. 
\end{definition}

We let $\cS_r(G, \vec{H}, B, X)$ denote the set of all orientations of $\vec{G}$ that are $(r,B,X)$-sparse extensions of $\vec{H}$.
Observe that we have
\begin{equation}\label{eq:dense_sparse_decomposition}
  \cS_r(G) =  \bigcup_{(A, B, X)} \bigcup_{\vec{H} \in \cD_{1}(G\setminus A)} \cS_r(G, \vec{H}, B, X),
\end{equation}
where the first union is over all $(A, B, X)$ such that $(A, G \setminus A, B, X)$ is an $r$-frame.

Given a graph $H$ and a set $A$ disjoint from $V(H)$, we define $G(A,H,p)$ as the random graph $G$ on vertex set $V(G) = A \cup V(H)$ such that $G[V(H)]=H$ and such that each element of $\binom{V(G)}{2}\setminus \binom{V(H)}{2}$ is present in $E(G)$ with probability $p$ independently at random.
We are now ready to state the main tool for the proof of Theorem~\ref{thm:main}, consisting of two bounds on the expected number of extensions of a digraph $\vec{H}$.
Its proof is postponed to Sections~\ref{sec:locally-sparse} and~\ref{sec:locally-dense}.

\begin{lemma}\label{lemma:key}
  Let $\vec{H}$ be an oriented graph and $A \subset [n] \setminus V(H)$ of size $\alpha/2$. The following holds for $G=G(A,H,p)$.
  \begin{enumerate}[$(i)$]
  \item \label{lemma:locally-sparse}
    If $2\leq r\leq \ell$ and $B, X \subset V(H)$ are such that $(A,H,B,X)$ is an $r$-frame, then
     \[\Ex \left[\big|\cD_{r-1}(G) \cap
       \cS_r(G,\vec{H},B,X)\big|\right]\leq \exp\left(9\alpha
       p^{-\frac{1}{\ell}}\log n\right). \]
   \item \label{lemma:locally-dense}
     $\Ex\left[\big|\cD_\ell(G, \vec{H})\big|\right]\leq \exp\left(\alpha (\ell+2) p^{-\frac{1}{\ell}}(\log n)^2\right).$
\end{enumerate}
\end{lemma}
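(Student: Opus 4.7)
The plan is to reveal the random edges incident to $A$ vertex-by-vertex and, for each newly exposed $a \in A$, encode the extension of $\vec{H}$ using a small amount of information supplied by the pseudorandomness of the already-oriented part. I handle the two parts in parallel: both are vertex-by-vertex peelings that bound the contribution per vertex and iterate over $|A| = \alpha/2$.

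For part (ii), the locally-dense case, I would adapt the $\circ_3$ argument of~\cite{CoKoMoMo20} sketched in Section~\ref{subset:CoKoMoMo}. Peel off $a \in A$: let $G' = G \setminus \{a\}$, let $\vec{G}'$ be the restriction of $\vec{G} \in \cD_\ell(G, \vec{H})$ (which lies in $\cD_\ell(G', \vec{H})$, since any $\ell$-frame on $V(G')$ remains an $\ell$-frame on $V(G)$), and choose a minimal trigger $\vec{T} \subseteq E(\vec{G}) \setminus E(\vec{G}')$ making $\vec{G}$ the unique $\circ_{\ell+2}$-free extension of $\vec{G}' \cup \vec{T}$. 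The swap argument shows that $T^{\pm} := N^{\pm}_{\vec{T}}(a)$ are independent sets in the underlying undirected graph of $(\vec{G}')^{\ell}$: a directed $\ell$-path between two vertices of $T^+$ (respectively $T^-$) would, by minimality, let us reverse one trigger edge at $a$ to create a directed $(\ell+2)$-cycle, contradicting $\circ_{\ell+2}$-freeness. Since $\vec{G}'$ is $\ell$-locally-dense, $(\vec{G}')^\ell$ has edge density of order $p^{1/\ell}$ between large subsets of $N_G(a)$, and the Kleitman--Winston graph container lemma bounds the number of such independent sets (of size $O((\log n)/p^{1/\ell})$) by $\exp(O(p^{-1/\ell}(\log n)^2))$. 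Folding in the probability of the random edges at $a$ and iterating over $|A|$ vertices yields $\exp\bigl(\alpha(\ell+2) p^{-1/\ell}(\log n)^2\bigr)$.

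For part (i), fix the $r$-frame $(A, H, B, X)$ and $\vec{H}$, and define the heavy sets
\[
 T^{\pm} := \bigl\{\, v \in V(H) \setminus X : d^{\pm}_{(\vec{H} \setminus X)^{r-1}}(v, B) \ge \tfrac{1}{4} p^{(\ell-r+2)/\ell}|B| \,\bigr\},
\]
which depend only on $(\vec{H}, B, X)$. The $(r-1)$-locally-dense hypothesis of $\vec{G} \in \cD_{r-1}(G)$, applied with forbidden set $A \cup X$ (a valid $(r-1)$-frame since $|A|+|X| \le (\ell-r+2)\alpha$ and $\vec{G}\setminus (A\cup X) = \vec{H}\setminus X$) and averaged against $B$, forces $|V(H) \setminus (X \cup T^+ \cup T^-)| < \alpha$. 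For each $a \in A$ set $S^{\pm}_a := N^{\pm}_{\vec{G}}(a) \cap T^{\pm}$; every $v \in S^+_a$ contributes at least $\tfrac{1}{4} p^{(\ell-r+2)/\ell}|B|$ distinct directed $r$-paths from $a$ to $B$ in $(\vec{G} \setminus X)^r$ (prepending $a \to v$ to an $(r-1)$-path in $\vec{H}\setminus X$), and analogously for $S^-_a$, so the sparsity hypothesis forces $|S^+_a| + |S^-_a| \le 4 p^{-1/\ell}$.

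The crucial observation is that $(S^+_a, S^-_a)$ together with $N_G(a)$ determines the orientation of every edge $av$ with $v \in T^+ \cup T^-$: for $v \in T^+ \setminus T^-$ with $av \in E(G)$, $v \in S^+_a$ iff $a \to v$ (else $v \to a$), symmetrically for $T^- \setminus T^+$, and $v \in T^+ \cap T^-$ is forced into $S^+_a \cup S^-_a$ whenever $av \in E(G)$. Hence the only ``free'' bipartite edges at $a$ go to $V(H) \setminus (T^+ \cup T^-)$, a set of size at most $|X| + \alpha \le (\ell+2)\alpha$. Summing $p^{|S^+|+|S^-|}\binom{n}{|S^+|}\binom{n}{|S^-|}$ over valid $(S^+_a, S^-_a)$ gives a factor of $\exp(O(p^{-1/\ell}\log n))$ per $a$; multiplying by the free-edge contribution $(1+p)^{(\ell+2)\alpha} = \exp(O(\ell\log n))$, the within-$A$ factor, and iterating yields $\exp(9\alpha p^{-1/\ell}\log n)$. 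The main technical hurdle will be part (ii): one must check that the Kleitman--Winston bound on independent sets in $(\vec{G}')^\ell[N_G(a)]$ of size $O((\log n)/p^{1/\ell})$ follows cleanly from $\ell$-local-density (which gives a density between an $\alpha$-set and an $\ell\alpha$-set rather than a classical pseudorandomness condition) and that it combines correctly with the probability that the encoded trigger edges lie in the random neighbourhood $N_G(a)$.
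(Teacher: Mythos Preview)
Your sketch for part~(ii) is essentially the paper's argument. The paper streamlines it by observing that the trigger sets $T^\pm_a$ are already independent in the \emph{fixed} graph $\vec{H}^\ell$ rather than in the varying $(\vec{G}')^\ell$, so a single application of Kleitman--Winston to $\vec{H}^\ell$ produces one container family $\mathcal{C}$ that works for every $a\in A$ simultaneously; this avoids the vertex-by-vertex peeling (and the attendant bookkeeping about whether $\vec{G}\setminus\{a\}$ is still $\ell$-locally-dense when $|X|$ bumps up by one), but is not materially different.

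Your sketch for part~(i), however, has a real gap. The step ``every $v\in S^+_a$ contributes at least $\tfrac14 p^{(\ell-r+2)/\ell}|B|$ distinct directed $r$-paths from $a$ to $B$, so the sparsity hypothesis forces $|S^+_a|+|S^-_a|\le 4p^{-1/\ell}$'' does not go through: the quantity $\dlr_{(\vec G\setminus X)^r}(a,B)$ counts \emph{endpoints} $b\in B$ reachable by some $r$-path, not the number of $r$-paths. If $v_1,v_2\in S^+_a$ have heavily overlapping $(r-1)$-out-neighbourhoods in $B$, then the union $\bigcup_{v\in S^+_a}N^+_{(\vec H\setminus X)^{r-1}}(v,B)\subset N^+_{(\vec G\setminus X)^r}(a,B)$ can remain small even when $|S^+_a|$ is large. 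The $(r-1)$-local-density hypothesis gives no control on how these individual neighbourhoods overlap. (There is also a minor slip: your averaging gives $|V(H)\setminus(X\cup B\cup T^+\cup T^-)|<\alpha$, not $|V(H)\setminus(X\cup T^+\cup T^-)|<\alpha$, since the frame condition needs $A'\cap B=\emptyset$; this only adds an $|B|\le\ell\alpha$ term to the free-edge count and is harmless.)

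The paper repairs this with an adaptive, container-style algorithm: for each $a\in A$ it iteratively picks $v_i$ maximising $\dlr_{(\vec H\setminus X)^{r-1}}(\cdot,B_{i-1})$, where $B_{i-1}$ is $B$ with the neighbourhoods of already-chosen fingerprint vertices removed. Only when the ``majority-direction'' edge through $v_i$ actually lies in $\vec G$ is it recorded in the fingerprint and $B_{i-1}$ shrunk. The shrinking forces the contributions of successive fingerprint vertices to lie in \emph{disjoint} pieces of $B$, so a telescoping sum genuinely lower-bounds $\dlr_{(\vec G\setminus X)^r}(a,B)$ and yields fingerprint degree at most $4p^{-1/\ell}$. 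The choice of $v_i$ depends only on $\vec H$ and the fingerprint so far, never on $\vec G$ directly, which is what lets the small fingerprint determine the whole container. Your static heavy sets $T^\pm$ cannot enforce this disjointness, and that is the missing idea.
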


Before proving Theorem~\ref{thm:main}, we state a simple probabilistic estimate which will
be used a few times throughout the paper.

\begin{lemma}\label{lemma:chernoff}
  Let $0 \leq p \leq 1$ and $C > 0$. If $S_p$ is a $p$-random subset of a finite set $S$, then
  \[
    \Ex\left[ C^{|S_p|}\right]\leq e^{Cp|S|}.
  \]
\end{lemma}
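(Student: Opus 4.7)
The plan is a direct moment-generating-function computation, exploiting the product structure of $|S_p|$. Write $|S_p| = \sum_{s \in S} X_s$, where $X_s$ are independent Bernoulli$(p)$ indicators for the events $\{s \in S_p\}$. Since $C^{|S_p|} = \prod_{s\in S} C^{X_s}$, independence gives
\[
\Ex\bigl[C^{|S_p|}\bigr] \;=\; \prod_{s \in S} \Ex\bigl[C^{X_s}\bigr] \;=\; \bigl(1 - p + pC\bigr)^{|S|} \;=\; \bigl(1 + p(C-1)\bigr)^{|S|}.
\]

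Next I would apply the elementary inequality $1 + x \leq e^x$, valid for all real $x$, with $x = p(C-1)$. This yields $(1 + p(C-1))^{|S|} \leq e^{p(C-1)|S|}$. Since $p \geq 0$ and $C - 1 \leq C$, we have $p(C-1)|S| \leq pC|S|$, so $e^{p(C-1)|S|} \leq e^{pC|S|}$, which is exactly the claimed bound.

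There is no real obstacle here; the only mild thing to notice is that the bound is stated with $C$ rather than the tighter $C - 1$ in the exponent, which makes the estimate uniform in the sign of $C-1$ and hence convenient for applications later in the paper where $C$ may be arbitrary.
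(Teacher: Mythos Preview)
Your argument is correct, but it differs from the paper's. The paper does not use the product structure at all: instead it bounds $\Pr(|S_p|=t)\leq (p|S|)^t/t!$ for each $t$ (a crude Poisson-type bound coming from $\binom{|S|}{t}p^t\leq (p|S|)^t/t!$) and then sums $\sum_{t\ge 0} C^t (p|S|)^t/t! = e^{Cp|S|}$ directly. Your route---the standard moment-generating-function computation via independence---actually yields the exact value $(1+p(C-1))^{|S|}$ before throwing anything away, so it is sharper in the intermediate step and arguably the more natural proof; the paper's version has the minor advantage of landing on the stated exponent $Cp|S|$ in one line without the extra comparison $p(C-1)\leq pC$.
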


\begin{proof}
  Since $\mathbb{P}(|S_p| = t) \leq \big(p|S|\big)^t/t!$ for every $t \geq 0$, we can compute
  \[
    \Ex[C^{|S_p|}] \leq \sum_{t=0}^{\infty} \frac{\big(Cp|S|\big)^t}{t!} = e^{Cp|S|},
  \]
  as claimed.
\end{proof}

We are ready to prove Theorem~\ref{thm:main}.

\begin{proof}[Proof of Theorem~\ref{thm:main}]
  Recall that $n$ is fixed, and put $z:=\exp\big(4(\ell+2) p^{-\frac{1}{\ell}}(\log n)^2\big)$.
  We will show by induction on $n' \leq n$ that
\begin{equation}\label{eq:main_induction}
    \Ex \Big[\big|\cD_1(G(n',p))\big|\Big] \leq z^{n'},
\end{equation}
which together with Lemma~\ref{lemma:1-locally-dense} and Markov's
inequality implies the desired result since $4(\ell+2) \leq 12\ell$.
Note that if $n' \leq p^{-1-1/\ell} (\log n)^2$ holds, then an application of Lemma~\ref{lemma:chernoff} gives
\[
  \Ex \Big[\big|\cD_1(G(n', p))\big|\Big]\leq \Ex\Big[2^{e(G(n',p))}\Big]\leq \exp\Big(p(n')^2\Big)\leq z^{n'}.
\]
Therefore, we may assume $n' > p^{-1-1/\ell}(\log n)^2$, which implies $n' \gg \alpha$.
We claim that it is enough to prove that
\begin{equation}
  \label{eq:expec}
  \Ex\Big[\big|\mathcal{D}_1(G(n',p))\big|\Big]\leq  z^{\alpha/2}\cdot \Ex \Big[\big|\mathcal{D}_1(G(n'-\alpha/2,p))\big|\Big].
\end{equation}
Indeed, by~\eqref{eq:expec} and the induction hypothesis
we obtain
\[
    \Ex \Big[\big|\cD_1(G(n',p))\big|\Big] \leq z^{\alpha/2}\cdot \Ex \Big[\big|\cD_1(G(n'-\alpha/2,p))\big|\Big] \leq z^{\alpha/2} \cdot z^{n'-\alpha/2}= z^{n'},
\]
which proves~\eqref{eq:main_induction}.
Thus, the remainder of the proof is dedicated to showing that~\eqref{eq:expec} holds.
Let $G=G(n',p)$ and notice that, since every $1$-locally-dense
orientation is either $\ell$-locally dense or admits a minimal $2 \leq
r \leq \ell$ for which it is $r$-locally sparse, we have
  \begin{equation}\label{eq:1_dense_decomp}
    \cD_1(G) = \cD_\ell(G) \; \cup \; \bigcup_{r=2}^{\ell} \,\, (\cD_{r-1}(G) \cap \cS_r(G)).
  \end{equation}
  Thus, it suffices to upper bound the expected sizes of the sets in the right-hand side of~\eqref{eq:1_dense_decomp}.
  Let $A\subset V(G)$ be an arbitrary set of size $\alpha/2$, and let $H = G\setminus A$ and $F$ be the graph with $V(F)=V(G)$ and  $E(F)=E(G)	 \setminus \binom{V(H)}{2}$.
  Observe that $F$ and $H$ independent of each other as random graphs.
  Therefore,
  \begin{equation}\label{eq:dense_bound}
    \Ex\Big[\big| \cD_\ell(G) \big|\Big] = \Ex_H\Bigg[\Ex_F\Big[ \sum_{\vec{H}} \big| \cD_\ell(G, \vec{H}) \big| \;\Big|\; H\Big]\Bigg] =  \Ex_H\Bigg[\sum_{\vec{H}}\Ex_F\Big[  \big| \cD_\ell(G, \vec{H}) \big| \;\Big|\; H\Big]\Bigg],
  \end{equation}
  where the sums are over $\vec{H} \in \cD_1(H)$.
  Conditioned on $H$ (that is, on $G \setminus A = H$), the distribution of $F \cup H$ is the same as $G(A, H, p)$.
  Then, from Lemma~\ref{lemma:key}\eqref{lemma:locally-dense} we obtain
  \[
    \Ex_F\Big[  \big| \cD_\ell(G, \vec{H}) \big| \;\Big|\; H\Big] \leq z^{\alpha/4},
  \]
  and hence, by~\eqref{eq:dense_bound},
  \[
    \Ex\Big[\big| \cD_\ell(G) \big|\Big] \leq \Ex_H\Bigg[\sum_{\vec{H} \in \cD_1(H)} z^{\alpha/4}\Bigg] = z^{\alpha/4} \cdot \Ex\Big[\big|\cD_1(G(n' - \alpha/2), p)\big|\Big].
  \]
  To bound the expected sizes of the remaining sets of the right-hand side of~\eqref{eq:1_dense_decomp}, we proceed analogously. For any $2\leq r\leq \ell$, using~\eqref{eq:dense_sparse_decomposition} we have
  \begin{align*}
    \Ex\Big[\big|\cD_{r-1}(G) \cap \cS_r(G)\big|\Big] &\leq \;\Ex\; \Big[ \sum_{(A,B,X)} \sum_{\vec{H} \in \cD_1(H)} \big|\cD_{r-1}(G) \cap \cS_r(G, \vec{H}, B, X)\big| \Big] \\
                                                      &= \sum_{(A,B,X)} \Ex_H\Bigg[\sum_{\vec{H} \in \cD_1(H)} \Ex_F \Big[ \big|\cD_{r-1}(G) \cap \cS_r(G, \vec{H}, B, X)\big| \;\Big|\; H\Big]\Bigg],
  \end{align*}
  where the outer sums are over all triples $(A, B,X)$ such that $(A, H, B, X)$ is an $r$-frame for $H = G \setminus A$. Since there are at most $n^{(2\ell+1)\alpha} \leq z^{\alpha/4}$ such triples, an application of
  Lemma~\ref{lemma:key}\eqref{lemma:locally-sparse} gives
  \[
    \Ex\Big[\big|  \cD_{r-1}(G) \cap \cS_r(G) \big|\Big] \leq \sum_{(A,B,X)} \Ex_H\Bigg[\sum_{\vec{H} \in \cD_1(H)} z^{\alpha/12}\Bigg] \leq z^{\alpha/3} \cdot \Ex\Big[\big|\cD_1(G(n' - \alpha/2), p)\big|\Big].
  \]
  By \eqref{eq:1_dense_decomp}, we thus have $$ \Ex\Big[\big|\mathcal{D}_1(G(n',p))\big|\Big]\leq \ell z^{\alpha/3}\cdot \Ex \Big[\big|\mathcal{D}_1(G(n'-\alpha/2,p))\big|\Big] $$
  which verifies~\eqref{eq:expec} and concludes the proof of~Theorem~\ref{thm:main}, since $\ell\leq p^{-1/\ell} \leq z^{\alpha/6}$.
\end{proof}

\section{Extending locally-sparse orientations}\label{sec:locally-sparse}

The main goal of this section is to prove
Lemma~\ref{lemma:key}(\ref{lemma:locally-sparse}), which bounds the
expected number of $(r-1)$-locally dense, $(r, B, X)$-sparse
extensions of an oriented graph $\vec{H}$.
The deterministic part of the proof is the following proposition.
We remark that, although we intentionally state the result in a way that emphasises its container-type nature, its proof does not use the Hypergraph Container Lemma.

\begin{prop}\label{prop:container-like}
  Let $\vec{H}$ be an oriented graph and $2 \leq r \leq
  \ell$. If $(A,H,B,X)$ is an $r$-frame, then there exists a family $\mathcal{C}$ of digraphs on
  vertex set $A \cup V(H)$ such that
  \begin{enumerate}[$(i)$]
  \item \label{item:contains} For every graph $G$ with $V(G) = A \cup
    V(H)$ and every orientation $\vec{G} \in \mathcal{D}_{r-1}(G) \cap
    \mathcal{S}_r(G, \vec{H}, B, X)$, there exists some $\vec{C} \in
    \mathcal{C}$ such that $E_{\vec{G}}(A, V(H)) \subset E(\vec{C})$.
  \item \label{item:few-antiparallel} For every $\vec{C} \in \mathcal{C}$, at most $\ell\alpha^2$ pairs $\{u,v\} \subset V(\vec{C})$ satisfy $\{uv, vu\} \subset E(\vec{C})$,
  \item \label{item:few-containers} $|\mathcal{C}| \leq \exp(8\alpha p^{-1/\ell} \log n)$.
  \end{enumerate}
\end{prop}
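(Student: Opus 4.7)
The approach is a container-style construction in the spirit of the sketch in Section~\ref{sec:outline}. Set $\Delta := \tfrac{1}{4} p^{(\ell-r+2)/\ell}|B|$ and, from the fixed input data $\vec{H}, B, X$, define
\[
T^+ := \{v \in V(H) \setminus X : d^+_{(\vec{H}\setminus X)^{r-1}}(v, B) \geq \Delta\}
\]
and $T^-$ analogously with $d^-$. These sets are common to every candidate extension. The $(r,B,X)$-sparseness will control the candidate's arcs between $A$ and $T^+ \cup T^-$, while the $(r-1)$-local denseness will force $T^+ \cup T^-$ to exhaust almost all of $V(H)$.

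Given $\vec{G} \in \mathcal{D}_{r-1}(G) \cap \mathcal{S}_r(G, \vec{H}, B, X)$ and $a \in A$, I would record only the pair $S^+_a := N^+_{\vec{G}}(a) \cap T^+$ and $S^-_a := N^-_{\vec{G}}(a) \cap T^-$, and claim that $|S^+_a| + |S^-_a| \leq 4 p^{-1/\ell}$. The argument is a path count: each $v \in S^+_a$ yields at least $\Delta$ length-$r$ paths from $a$ to $B$ in $\vec{G} \setminus X$ beginning with the arc $av$ (by concatenating $av$ with the $\geq \Delta$ length-$(r-1)$ paths from $v$ to $B$ in $\vec{H} \setminus X$), and each $v \in S^-_a$ yields $\Delta$ such paths ending in $va$, with all these paths distinguished by their first (respectively last) internal vertex; the sparseness hypothesis on $\dlr_{(\vec{G}\setminus X)^r}(a,B)$ bounds the total by $p^{(\ell-r+1)/\ell}|B|$, which gives the claim. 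In parallel, I would show $|V(H) \setminus (T^+ \cup T^-)| < 2\ell\alpha$ by invoking the $(r-1)$-local denseness of $\vec{G}$ with removed set $X \cup A$ (of size at most $(\ell+2-r)\alpha$, admissible by Definition~\ref{def:locally_dense}): any $\alpha$-subset $A'$ of $V(H) \setminus (X \cup B \cup T^+ \cup T^-)$ would satisfy $\elr_{(\vec{H}\setminus X)^{r-1}}(A', B) \geq \tfrac{1}{2} p^{(\ell-r+2)/\ell}\alpha|B|$, contradicting the degree bound $\dlr_{(\vec{H}\setminus X)^{r-1}}(v, B) < 2\Delta$ enjoyed by every $v \in A'$. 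Hence $|V(H) \setminus (T^+ \cup T^-)| < \alpha + |X| + |B| \leq 2\ell\alpha$.

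The container family is then assembled by letting $\mathcal{C}$ consist of the digraphs $\vec{C}$ obtained from admissible data $\{(S^+_a, S^-_a)\}_{a \in A}$ by putting, for each $a \in A$ and $v \in V(H)$, the arc $av$ into $\vec{C}$ iff $v \in S^+_a$ or $v \notin T^+$, and the arc $va$ into $\vec{C}$ iff $v \in S^-_a$ or $v \notin T^-$, with no other arcs. Property~\ref{item:contains} then holds by design: if $\vec{G}$ contains the arc $av$ with $v \in T^+$ then $v \in S^+_a$ so $av \in \vec{C}$, while if $v \notin T^+$ then $av \in \vec{C}$ by default, and symmetric reasoning handles $va$. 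Property~\ref{item:few-containers} follows because each $a \in A$ admits at most $\binom{n}{\leq 4p^{-1/\ell}}^2 \leq n^{8p^{-1/\ell}}$ choices of $(S^+_a, S^-_a)$, giving $|\mathcal{C}| \leq n^{8p^{-1/\ell} \cdot \alpha/2} = \exp(4\alpha p^{-1/\ell} \log n)$. For property~\ref{item:few-antiparallel}, antiparallel pairs $\{a, v\}$ can only arise when $v \in (T^+ \setminus T^-) \cap S^+_a$, when $v \in (T^- \setminus T^+) \cap S^-_a$, or when $v \notin T^+ \cup T^-$; the first two cases contribute at most $|A| \cdot 4p^{-1/\ell} = 2\alpha p^{-1/\ell}$ pairs combined, while the third contributes at most $|A| \cdot 2\ell\alpha = \ell\alpha^2$ pairs, yielding the required bound after absorbing the lower-order term.

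I expect the main obstacle to be the claim $|S^+_a| + |S^-_a| \leq 4p^{-1/\ell}$: while the path-counting argument above is clean, one must interpret $\dlr_{(\vec{G}\setminus X)^r}(a, B)$ as bounding the number of length-$r$ paths from $a$ to $B$ rather than the number of endpoint pairs connected by such a path. If the sparseness bound is stated in the stricter, endpoint-counting sense, an additional overlap argument is required---for instance, a multiplicity bound on the endpoint of each path, derived by further application of the $(r-1)$-local denseness of $\vec{G}$, or by selecting canonical path representatives.
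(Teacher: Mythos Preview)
Your argument has a real gap exactly where you flag it: the bound $|S^+_a|+|S^-_a|\le 4p^{-1/\ell}$ does not follow from your path count. By definition, $\dlr_{(\vec G\setminus X)^r}(a,B)$ is a degree in the power digraph $(\vec G\setminus X)^r$; it counts \emph{endpoints} $b\in B$ joined to $a$ by some $r$-path, not the number of $r$-paths. Your inequality $\sum_{v\in S^+_a}d^+_{(\vec H\setminus X)^{r-1}}(v,B)\ge |S^+_a|\,\Delta$ is correct, but this sum can vastly exceed $d^+_{(\vec G\setminus X)^r}(a,B)$ whenever the out-neighbourhoods $N^+_{(\vec H\setminus X)^{r-1}}(v,B)$ overlap. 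Such overlap is \emph{not} excluded by $(r-1)$-local denseness: one may have many vertices $v\in T^+$ whose out-$(r-1)$-neighbourhood in $B$ is a common set $B_1$ of size $\Delta$, while giving each such $v$ the large in-$(r-1)$-neighbourhood $B\setminus B_1$ so that $\dlr_{(\vec H\setminus X)^{r-1}}(v,B'')$ is as big as $|B''|$ for every $B''\subset B$. If $a$ sends an arc to all these $v$, then $|S^+_a|$ is arbitrarily large while $d^+_{(\vec G\setminus X)^r}(a,B)=|B_1|=\Delta\le p^{(\ell-r+1)/\ell}|B|$, so $(r,B,X)$-sparseness still holds. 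Neither proposed patch closes this: a per-endpoint multiplicity bound would have to be $O(1)$ to recover the stated inequality, which fails in the scenario just described, and no ``canonical representative'' scheme is visible that both keeps the fingerprint of size $O(p^{-1/\ell})$ and determines the container.

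The paper resolves this by making the fingerprint \emph{adaptive} rather than fixing $T^+,T^-$ in advance. For each $a\in A$ it runs a Kleitman--Winston style loop: repeatedly pick $v_i$ maximising $\dlr_{(\vec H\setminus X)^{r-1}}(\cdot,B_{i-1})$, set $e_i$ to be the majority-direction arc between $a$ and $v_i$, and, whenever $e_i\in E(\vec G)$, record $e_i$ in $\vec T$ and \emph{delete} $N_{(\vec H\setminus X)^{r-1}}(v_i)$ from $B_{i-1}$. The deletion forces the recorded neighbourhoods $N_i$ to be pairwise disjoint subsets of $N_{(\vec G\setminus X)^r}(a,B)$, so $\sum_i |N_i|\le \dlr_{(\vec G\setminus X)^r}(a,B)\le p^{(\ell-r+1)/\ell}|B|$; meanwhile $(r-1)$-local denseness, applied with the \emph{current} set $B_{i-1}$ (which one checks stays of size at least $(r-1)\alpha$), gives $|N_i|\ge \tfrac12 p^{(\ell-r+2)/\ell}|B_{i-1}|$ via the maximality of $v_i$. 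Together these yield $d_{\vec T}(a)\le 4p^{-1/\ell}$. Your bound $|V(H)\setminus(T^+\cup T^-)|<2\ell\alpha$, the container assembly, and the counts for~\eqref{item:few-antiparallel} and~\eqref{item:few-containers} are all fine and mirror the corresponding parts of the paper, but the small-fingerprint bound is the crux and genuinely requires the adaptive deletion step.
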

In other words, we find a small family of digraphs (by property (\ref{item:few-containers})) which cover every orientation of $G$ in $\mathcal{D}_{r-1}(G) \cap \mathcal{S}_r(G, \vec{H}, B, X)$ (by property (\ref{item:contains})). Additionally, for each $\vec{C} \in \mathcal{C}$, any two orientations of $G$ covered by $\vec{C}$ differ only in a small set of edges (by property (\ref{item:few-antiparallel})), allowing us to bound the expected number of orientations of $G = G(n,p)$ contained in each $\vec{C}$. We will then be able to bound the expected size of $\mathcal{D}_{r-1}(G) \cap \mathcal{S}_r(G, \vec{H}, B, X)$ by summing the expected number of orientations of $G$ contained in $\vec{C}$ for each $\vec{C} \in \mathcal{C}$.

\begin{proof}[Proof of Proposition~\ref{prop:container-like}]
  The proof loosely follows the idea behind the proof of the Kleitman--Winston graph container lemma.
  We will describe an algorithm which takes as input an orientation $\vec{G} \in \mathcal{D}_{r-1}(G) \cap \mathcal{S}_r(G, \vec{H}, B, X)$ for some graph $G$ on vertex set $A \cup V(H)$, and outputs a bipartite digraph $\vec{C}$ with parts $A$ and $V(H)$ trivially satisfying properties (\ref{item:contains})~and~(\ref{item:few-antiparallel}).
  Along with $\vec{C}$, the algorithm will output a special subgraph $\vec{T} \subset \vec{C}$, called a \emph{fingerprint}, which will be useful for proving property~\eqref{item:few-containers}.

  Recall that $\alpha = 2^6(\log n)/p$. Recall also, from the definition of $r$-frame, that $A \subset [n] \setminus V(H)$, that $B, X \subset V(H)$ are disjoint and that
   \[
     |A| = \alpha/2,\qquad r\alpha\leq |B|\leq \ell\alpha,\qquad \text{and} \qquad |X|\leq (\ell+1-r) \alpha.
   \]
   Set \[ L = |V(H)| - |B| - |X| - \alpha \] and observe that we may assume $L\geq 0$, since otherwise $|V(H)|<|B|+|X|+\alpha\leq 2\ell\alpha$ and therefore it would suffice to take $\mathcal{C}=\{\vec{C}\}$, where $\vec{C}$ is the complete bipartite digraph between $A$ and $V(H)$. Algorithm~\textsc{\ref{alg:cA}}, which will be used to construct the desired family $\mathcal{C}$, is presented below.   We will denote the outputs of $\textsc{Encode}(\vec{G})$ by $\vec{T}(\vec{G})$ and $\vec{C}(\vec{G})$, respectively, and define
  \begin{equation}\label{eq:def_C}
    \mathcal{C} = \left\{\, \vec{C}(\vec{G}) : \vec{G} \in \mathcal{D}_{r-1}(G) \cap \mathcal{S}_r(G, \vec{H}, B, X) \, \right\}.
  \end{equation}
  
  \begin{procedure}[h]
    \SetKwInOut{Input}{Input}
    \SetKwInOut{Output}{Output}
    \Input{An extension $\vec{G}$ of $\vec{H}$ on vertex set $A \cup V(H)$}
    \Output{Digraphs $\vec{T}$ and $\vec{C}$ such that $E(\vec{T}) \subset E_{\vec{G}}(A, V(H)) \subset E(\vec{C})$}
    \ForEach{$a \in A$}{
      $B_0$ $\gets$ $B$ \\
      \For{$i \gets 1 \textrm{ to } L$}{
        pick $v_i \in V(H) \setminus (B \cup X \cup \{v_1, \ldots, v_{i-1}\}$) maximising $\dlr_{(\vec{H}\setminus X)^{r-1}}(\,\cdot\,, B_{i-1})$, with ties broken canonically \label{line:pick_v} \\
        \eIf{$\big|N^+_{(\vec{H}\setminus X)^{r-1}}(v_i,B_{i-1})\big|\geq \frac{1}{2} \big|N_{(\vec{H}\setminus X)^{r-1}}(v_i,B_{i-1})\big|$\label{line:condition_e}}
            {$e_i$ $\gets$ $av_i$\label{line:if_branch_e}}
            {$e_i$ $\gets$ $v_ia$\label{line:else_branch_e}}
        \eIf{$e_i \in E(\vec{G})$\label{line:condition_G}}
           {add the edge $e_i$ to $\vec{T}$ and to $\vec{C}$ \label{line:if_branch_C} \\ $B_{i}$ $\gets$ $B_{i-1} \setminus N_{(\vec{H} \setminus X)^{r-1}}(v_i)$\label{line:if_branch_B}}
           {add the edge $\operatorname{reverse}(e_i)$ to $\vec{C}$, where $\operatorname{reverse}(uw) = wu$ \label{line:else_branch_C} \\ $B_{i}$ $\gets$ $B_{i-1}$\label{line:else_branch_B}}
      }
      \ForEach{$v \in V(H) \setminus \{v_1, \ldots, v_L\}$\label{line:double_edges_loop}}{add the edges $av$ and $va$ to $\vec{C}$\label{line:double_edges}}
    }
    \Return{$\vec{T}$ and $\vec{C}$}
  \caption{Encode(): A container-like algorithm for $\circ_{\ell+2}$-free extensions of $\vec{H}$}\label{alg:cA}
  \end{procedure}

  Observe that, for every extension $\vec{G}$ of $\vec{H}$, it holds that $E(\vec{T}) \subset E_{\vec{G}}(A, V(H))$, since edges are only added to $\vec{T}$ in line \ref{line:if_branch_C}. Moreover, whenever $e_i = uw$ is added to $\vec{C}$ in line \ref{line:if_branch_C}, it holds that $wu \not\in E(\vec{G})$, and therefore $E_{\vec{G}}(A, V(H)) \subset E(\vec{C})$, verifying property \eqref{item:contains}. Similarly, property \eqref{item:few-antiparallel} holds because, for every $a
  \in A$, the loop in line~\ref{line:double_edges_loop} adds $|V(H)|-L\leq 2\ell\alpha$
  pairs of antiparallel edges to $\vec{C}$ by the choice of $L$, and
  the other places in which $\vec{C}$ is modified
  (lines~\ref{line:if_branch_C}~and~\ref{line:else_branch_C}) add
  edges in one direction only.

  Showing that $\mathcal{C}$ satisfies property \eqref{item:few-containers} will require the most amount of work.
  The intuition for it (which will be formalised in Claim~\ref{claim:maxdeg_T}) is easy to describe, however: the if-statement in lines~\ref{line:condition_e}--\ref{line:else_branch_e} determines the orientation of $\{a, v_i\}$ that ``extends the majority of $(r-1)$-paths'' with one endpoint in $B_i$ to $r$-paths.
  If this directed edge is in $\vec{G}$, it is added to $\vec{T}$.
  Therefore, if the input $\vec{G}$ is $(r-1)$-locally-dense, each edge added to $\vec{T}$ extends $\Omega(p^{\frac{\ell-r+2}{\ell}}|B|)$ $(r-1)$-paths of $\vec{H}$ to $r$-paths of $\vec{G}$.
  Since $\vec{G}$ is an $(r, B, X)$-sparse extension of $\vec{H}$, however, there can be at most $O(p^{\frac{\ell-r+1}{\ell}}|B|)$ $r$-paths starting or ending at a given $a \in A$. Therefore, at most $O(p^{-1/\ell})$ edges can be added to $\vec{T}$ in line~\ref{line:if_branch_C} for each $a \in A$. Since $\vec{T}$ determines $\vec{C}$ (Claim~\ref{claim:T_determines_C}), this implies that there are few choices for $\vec{C}$.

  We start the formal proof of \eqref{item:few-containers} by showing that, given an oriented graph $\vec{H}$ and an $r$-frame $(A, H, B, X)$, the output $\vec{T}$ of the algorithm determines the output $\vec{C}$ as well.

  \begin{claim}\label{claim:T_determines_C}
    Let $\vec{G}_1$ and $\vec{G}_2$ be extensions of $\vec{H}$ on vertex set $A \cup V(H)$. If $\vec{T}(\vec{G}_1) = \vec{T}(\vec{G}_2)$, then $\vec{C}(\vec{G}_1) = \vec{C}(\vec{G}_2)$ (even if the underlying graphs $G_1$ and $G_2$ differ).
  \end{claim}

  \begin{proof}
    Line~\ref{line:condition_G} is the only time at which the
    algorithm examines edges outside $V(H)$. Therefore, if
    $\vec{C}(\vec{G}_1) \neq \vec{C}(\vec{G}_2)$, we may suppose without loss of generality there exists $a \in
    A$ and $1 \leq i \leq L$ such that $e_i\in E(\vec{G}_1)$ and $e_i\not \in E(\vec{G}_2)$. If $i$ is minimal with this property, then $e_i$ is
    the same in both executions, but then $e_i$ is added to $\vec{T}(\vec{G}_1)$ and not added to $\vec{T}(\vec{G}_2)$,
    contradicting the fact that $\vec{T}(\vec{G}_1) = \vec{T}(\vec{G}_2)$.
  \end{proof}

  If the extension $\vec{G}$ of $\vec{H}$ given as input to
  $\textsc{Encode}$ is not restricted, then it is possible for the
  graph $\vec{T}$ to have many edges. Roughly speaking, the following
  claim shows this cannot happen when $\vec{G} \in
  \mathcal{D}_{r-1}(G) \cap \mathcal{S}_r(G, \vec{H}, B, X)$. In other
  words, for these digraphs, $\vec{T}$ can be thought of as an
  ``efficient encoding'' of most of $E_{\vec{G}}(A, V(H))$.

  \begin{claim}\label{claim:maxdeg_T}
    Let $\vec{G} \in \mathcal{D}_{r-1}(G) \cap \mathcal{S}_r(G,
    \vec{H}, B, X)$. If $\vec{T} = \vec{T}(\vec{G})$ is the
    fingerprint output by \textsc{Encode} with input $\vec{G}$, then $d_{\vec{T}}(a) \leq 8p^{-1/\ell}$ for every $a \in A$.
    \end{claim}

    \begin{proof}
      Fix $a \in A$, and let $S$ be the set of values of $i \in [L]$ for which lines~\ref{line:if_branch_C}~and~\ref{line:if_branch_B} of the algorithm are executed (that is, for which $e_i \in E(\vec{G})$). Observe that $|S| = d_{\vec{T}}(a)$, since only line~\ref{line:if_branch_C} adds edges to $\vec{T}$. Therefore, our goal is to upper bound $|S|$. We claim that
      \begin{equation}\label{eq:lower_r_paths}
        \frac{1}{2} \sum_{i \in S} \dlr_{(\vec{H} \setminus X)^{r-1}}(v_i, B_{i-1}) \leq \dlr_{(\vec{G} \setminus X)^r}(a,B) \leq p^{(\ell-r+1)/\ell}|B|.
      \end{equation}
      Indeed, the second inequality follows directly from the assumption that $\vec{G} \in \mathcal{S}_r(G, \vec{H}, B, X)$. To see the first inequality, let $S^+ \subset S$ be those values of $i$ for which $d^+_{(\vec{H}\setminus X)^{r-1}}(v_i,B_{i-1})\geq \frac{1}{2} \dlr_{(\vec{H}\setminus X)^{r-1}}(v_i,B_{i-1})$ (see line~\ref{line:condition_e}), let $S^- = S \setminus S^+$, and define
      \[ N_i =
        \begin{cases}
          N^+_{(\vec{H} \setminus X)^{r-1}}(v_i, B_{i-1}) &\text{if }i \in S^+\\
          N^-_{(\vec{H} \setminus X)^{r-1}}(v_i, B_{i-1}) &\text{if }i \in S^-.
        \end{cases}
      \]
      Then $N_i \subset N_{(\vec{G} \setminus X)^r}(a, B)$, since we
      can use the edge $e_i$ to extend an $(r-1)$-path starting or
      ending at $v_i$ to an $r$-path starting or ending at $a$.
      Moreover, since $N_i \subset B_{i-1}$ by definition and $N_{j}
      \cap B_{i-1} = \emptyset$ if $j < i$ by
      line~\ref{line:if_branch_B}, the family $\{N_i : i \in S\}$ is
      pairwise disjoint.  Therefore, $\sum_{i\in S}|N_i| \leq
      \dlr_{(\vec{G} \setminus X)^r}(a,B)$.  Since $|N_i| \geq
      \frac{1}{2} \dlr_{(\vec{H}\setminus X)^{r-1}}(v_i,B_{i-1})$ by
      line~\ref{line:condition_e} (that is, by the condition used to
      decide whether an $i \in S$ is in $S^+$ or $S^-$), we
      obtain~\eqref{eq:lower_r_paths}.

      We will obtain an upper bound on $|S|$ by lower bounding the
      terms in the left-hand side sum of~\eqref{eq:lower_r_paths},
      which will use the assumption that $\vec{G}\in \cD_{r-1}(G)$.
      To do so, recall that $v_i$ was chosen in line~\ref{line:pick_v}
      to maximise $f(x) = \dlr_{(\vec{H}\setminus X)^{r-1}}(x,
      B_{i-1})$ on $V_i \coloneqq V(H) \setminus (B \cup X \cup \{v_1,
      \ldots, v_{i-1}\})$ and note that $|V_i| \geq \alpha$.  Thus, let
      $A'$ be any subset of $V_i$ with $v_i \in A'$ and $|A'| =
      \alpha$ and observe that $\vec{H} \setminus X = \vec{G}
      \setminus (X \cup A)$. Then, we have
      \begin{equation}\label{eq:maximum_avg}
        \dlr_{(\vec{H}\setminus X)^{r-1}}(v_i, B_{i-1}) \geq \frac{1}{|A'|} \cdot e_{(\vec{G} \setminus (X \cup A))^{r-1}}(A', B_{i-1}).
      \end{equation}
      
      We want to use the hypothesis $\vec{G}\in \cD_{r-1}(G)$ to bound the right-hand side of~\eqref{eq:maximum_avg}.
      For that, from Definition~\ref{def:locally_dense}, we need that $|A'| = \alpha$, $(r-1)\alpha\leq |B_{i-1}|\leq \ell\alpha$, and $|X\cup A|\leq (\ell+2-r) \alpha$.
      It suffices to show that $|B_{i-1}| \geq (r-1)\alpha$, since the other bounds hold trivially.
      We observe that $|B_0| = |B|$ and, for every $i \in [L]$,
      \[ |B_{i-1}| - |B_{i}| =
        \begin{cases}
          \dlr_{(\vec{H}\setminus X)^{r-1}}(v_i,B_{i-1})&\text{if }i \in S \\
          0&\text{if }i \in [L] \setminus S,
        \end{cases}
      \] by lines~\ref{line:if_branch_B}~and~\ref{line:else_branch_B}. Therefore, summing and using~\eqref{eq:lower_r_paths}, we obtain
    \[ |B_{L}| \geq |B| - \sum_{i \in S} \dlr_{(\vec{H} \setminus X)^{r-1}}(v_i, B_{i-1}) \geq |B| - 2p^{(\ell-r+1)/\ell}|B|. \]
    Since $p \leq \left(2\ell \right)^{-\ell}$, we have $|B_{i-1}| \geq |B_L| \geq (1-1/\ell)|B|$. In particular, $|B_{i-1}| \geq (r-1)\alpha$, as claimed. Therefore, the condition in Definition~\ref{def:locally_dense} tells us that
    \begin{equation}\label{eq:locally_dense_applied}
      e_{(\vec{G} \setminus (X \cup A))^{r-1}}(A', B_{i-1}) \geq \frac{1}{2} \cdot p^{(\ell-r+2)/\ell}|A'||B_{i-1}|.
    \end{equation} Combining~\eqref{eq:maximum_avg}~and~\eqref{eq:locally_dense_applied}, plugging the resulting bound into~\eqref{eq:lower_r_paths} and using that $|B_{i-1}| \geq |B_L|$, we obtain that
    \[
      \frac{|S|}{4} \cdot p^{(\ell-r+2)/\ell} \cdot |B_{L}| \leq p^{(\ell-r+1)/\ell}|B|,
    \]
    As we have already shown that $|B_L| \geq |B|/2$, we obtain that $d_{\vec{T}}(a) = |S| \leq 8p^{-1/\ell}$, as claimed.
  \end{proof}

  We have already seen that the collection $\mathcal{C}$, defined in~\eqref{eq:def_C}, satisfies the claimed properties \eqref{item:contains} and \eqref{item:few-antiparallel}, and we are now ready to prove \eqref{item:few-containers}.
  Let
  $$
  \mathcal{T} = \left\{\, \vec{T}(\vec{G}) : \vec{G} \in
    \mathcal{D}_{r-1}(G) \cap \mathcal{S}_r(G, \vec{H}, B, X) \,
  \right\} $$ and notice that by Claim \ref{claim:T_determines_C} we
  have $|\mathcal{C}| = |\mathcal{T}|$.
  Moreover, for every $\vec{T} \in \mathcal{T}$, recall that $\vec{T}$ only contains edges incident to $A$ and that, by Claim~\ref{claim:maxdeg_T}, $|N^+_{\vec{T}}(a) \cup N^-_{\vec{T}}(a)| \leq 8p^{-1/\ell}$ for each $a\in A$. By considering the number of ways of choosing these neighbourhoods, we obtain
  $$|\mathcal{T}|\leq \binom{n}{\leq 8p^{-1/\ell}}^{2|A|}\leq
  \exp\left(8 \alpha p^{-1/\ell}\log n\right),$$
  as desired.
\end{proof}

Having proved Proposition~\ref{prop:container-like}, we are now ready to deduce Lemma~\ref{lemma:key}(\ref{lemma:locally-sparse}) from Proposition~\ref{prop:container-like}.

\begin{proof}[Proof of Lemma \ref{lemma:key}(\ref{lemma:locally-sparse})]

  Let $G = G(A,H,p)$, recalling that, by definition, we have $V(G) = A \cup V(H)$, $G[V(H)]=H$ and each element of $\binom{V(G)}{2}\setminus \binom{V(H)}{2}$ is an edge of $G$ with probability $p$ independently at random. Let $\mathcal{O} = \cD_{r-1}(G) \cap \cS_r(G,\vec{H},B,X)$, and recall that our goal is to bound the expected size of $\mathcal{O}$.

  Let $\mathcal{C}$ be given by Proposition~\ref{prop:container-like} and say an orientation $\vec{G} \in \mathcal{O}$ is \emph{compatible} with a container $\vec{C} \in \mathcal{C}$ if $E_{\vec{G}}(A, V(H)) \subset E(\vec{C})$, and let $D(\vec{C})$ be the set of pairs $\{u, v\}$ for which $\{uv, vu\} \subset E(\vec{C})$. Given a container $\vec{C} \in \mathcal{C}$, let
  \[ N(\vec{C}) = |E(G) \cap (D(\vec{C}) \cup (A\times A))| \]
  and observe that $2^{N(\vec{C})}$ is an upper bound on the number of orientations $\vec{G} \in \mathcal{O}$ which are compatible with $\vec{C}$. Moreover, by Proposition~\ref{prop:container-like}(\ref{item:contains}), every element of $\mathcal{O}$ is compatible with at least one element of $\mathcal{C}$, and therefore
  \[ |\mathcal{O}| \leq \sum_{\vec{C} \in \mathcal{C}} 2^{N(\vec{C})}. \]
  We can take expectations on both sides and apply Lemma~\ref{lemma:chernoff} to obtain
  \[
    \Ex\big[|\mathcal{O}|\big]\leq \sum_{\vec{C} \in \mathcal{C}} e^{2p\cdot (|D(\vec{C})| + |A\times A|)} \leq \exp\left( 8 \alpha p^{-1/\ell}\log n+3\ell p\alpha^2\right),
  \]
  where the last inequality uses
  Proposition~\ref{prop:container-like}(\ref{item:few-antiparallel})--(\ref{item:few-containers})
  to bound $|D(\vec{C})|$ and $|\mathcal{C}|$, respectively. Since
  $p\alpha= 2^6 \log n$ and $p\leq (2^8\ell)^{-\ell}$, we have $\ell p \alpha \leq p^{-1/\ell} (\log n)/4$. Therefore,
  $$\Ex\big[|\mathcal{O}|\big]\leq \exp\left( 9 \alpha
    p^{-1/\ell}\log n \right).$$
   This finishes the proof.
\end{proof}

\section{Extending locally-dense orientations}\label{sec:locally-dense}

In this section we prove Lemma~\ref{lemma:key}(\ref{lemma:locally-dense}), which bounds the
expected number of $\ell$-locally-dense extensions of an oriented graph $\vec{H}$.
For that we use the \emph{graph container lemma}, implicit in the work of Kleitman and Winston~\cite{KlWi82} and explicitly stated by Sapozhenko~\cite{Sa01}.
The version we use can be found on a survey of Samotij~\cite{SamotijIndep}.

\begin{lemma}[{\cite{SamotijIndep}*{Lemmas 1 and 2}}]
  \label{lemma:container}
  Let $G$ be a graph on $n$ vertices, an integer $q$ and reals $0 \leq \beta \leq 1$ and $R$ such that $R \geq e^{-\beta q} n$.
  Suppose that every set $U \subset V(G)$ with $|U| \geq R$ satisfies
  \[
    e(G[U]) \geq \beta \binom{|U|}{2}.
  \]
  Then there exists a collection $\mathcal{C} \subset \mathcal{P}(V(G))$ such that:
  \begin{enumerate}[$(i)$]
    \item Every independent set of $G$ is contained in some $C \in \mathcal{C}$,
    \item $|C| \leq R+q$ for every $C \in \mathcal{C}$,
    \item $|\mathcal{C}| \leq \sum_{i=0}^q \binom{n}{i}$.
    \end{enumerate}
  \end{lemma}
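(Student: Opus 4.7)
The plan is to prove the lemma via the standard algorithmic argument of Kleitman and Winston. I will construct, for each independent set $I$ of $G$, a ``fingerprint'' $T(I) \subset I$ of size at most $q$ together with a container $C(I) \supseteq I$ of size at most $R + q$, and arrange matters so that $C(I)$ is fully determined by $T(I)$. The family $\mathcal{C} := \{C(I) : I \text{ independent in } G\}$ will then automatically satisfy \emph{(i)} and \emph{(ii)}, while \emph{(iii)} will follow from counting the possible fingerprints.

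First I would fix a canonical total order on $V(G)$ and describe the following procedure, which takes an independent set $I$ as input. Initialise $A_0 = V(G)$ and $T_0 = \emptyset$. At step $i \geq 1$, if $|A_{i-1}| \leq R$ halt; otherwise select $v_i \in A_{i-1}$ maximising $d_{G[A_{i-1}]}(v_i)$, with ties broken by the canonical order. If $v_i \in I$, set $T_i = T_{i-1} \cup \{v_i\}$ and $A_i = A_{i-1} \setminus (\{v_i\} \cup N_G(v_i))$; otherwise set $T_i = T_{i-1}$ and $A_i = A_{i-1} \setminus \{v_i\}$. After halting at step $t$, output $T(I) = T_t$ and $C(I) = T_t \cup A_t$. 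Containment $I \subset C(I)$ is immediate: at each step the vertices removed from $A$ are either the selected $v_i$ (placed into $T$ if it lies in $I$) or neighbours of some $v_i \in I$ (which lie outside $I$ by independence), so every element of $I$ ends up either in $T_t$ or in $A_t$.

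The only real computation is the bound $|T(I)| \leq q$. Whenever the algorithm executes the ``$v_i \in I$'' branch it has $|A_{i-1}| > R$, so the hypothesis yields $e(G[A_{i-1}]) \geq \beta \binom{|A_{i-1}|}{2}$, whence the maximum degree in $G[A_{i-1}]$ is at least $\beta(|A_{i-1}|-1)$. Deleting $v_i$ together with its neighbours in $A_{i-1}$ therefore shrinks $A$ by at least $\beta(|A_{i-1}|-1) + 1$ vertices, so $|A_i| \leq (1-\beta)|A_{i-1}|$, uniformly across ``if'' steps regardless of how many ``else'' steps are interspersed. Taking $q+1$ such ``if'' steps would thus force $|A| \leq (1-\beta)^{q+1} n \leq e^{-\beta q} n \leq R$, contradicting continuation. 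Hence $|T(I)| \leq q$ and $|C(I)| \leq R + q$. The step I expect to require the most care is the reconstructibility of $C(I)$ from $T(I)$ alone: the crucial observation is that the choice of $v_i$ is a function of $A_{i-1}$ only (not of $I$), and that, given $T(I)$, an outside observer can simulate the procedure by consulting $T(I)$ to decide on the fly whether each canonically-chosen $v_i$ belongs to $I$. This determines $A_t$, hence $C(I)$, from $T(I)$, yielding $|\mathcal{C}| \leq |\{T(I)\}| \leq \sum_{i=0}^{q} \binom{n}{i}$, as required.
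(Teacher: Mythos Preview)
The paper does not prove this lemma; it is quoted verbatim from Samotij's survey and used as a black box. Your argument is the standard Kleitman--Winston proof (exactly the one behind the cited Lemmas~1~and~2 in~\cite{SamotijIndep}), and it is correct as written: the max-degree selection rule, the shrinkage estimate $|A_i| \leq (1-\beta)|A_{i-1}|$ at each ``if'' step, and the reconstructibility of $C(I)$ from $T(I)$ are all handled properly.
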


The following simple averaging result will be useful for applying the graph container lemma in the proof of Lemma~\ref{lemma:key}~\eqref{lemma:locally-dense}.

\begin{lemma}\label{lemma:edgesaveraging}
  Let $\vec{G}$ be an $\ell$-locally-dense oriented graph.
  For every pair of disjoint sets $U,~X\subset V(G)$ with $|U| \geq (\ell+1) \alpha$ and $|X|= \alpha$, the graph $\vec{H}=\vec{G}\setminus X$ satisfies $e(\vec{H}^\ell[U]) \geq p^{1/\ell}\binom{|U|}{2}/(2\ell+2)$.
\end{lemma}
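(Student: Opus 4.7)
The plan is an averaging argument that lifts the $\ell$-locally-dense bound (Definition~\ref{def:locally_dense} with $r = \ell$) for a single pair $(A', B)$ to a global lower bound on the edges of $\vec{H}^\ell[U]$. The pointwise setup is immediate: since $X$ is disjoint from $U$ and $|U| \ge (\ell+1)\alpha$, any pair of disjoint subsets $A', B \subset U$ with $|A'| = \alpha$ and $|B| = \ell\alpha$, together with $X$, satisfies the size and disjointness conditions of Definition~\ref{def:locally_dense} at $r = \ell$. Since $\vec{H}^\ell = (\vec{G}\setminus X)^\ell$, the $\ell$-locally-dense property of $\vec G$ directly yields
\[
\elr_{\vec H^\ell}(A', B) \;\geq\; \tfrac{1}{2}\,p^{1/\ell}\,|A'|\,|B| \;=\; \tfrac{1}{2}\,\ell\alpha^2 p^{1/\ell}
\]
for every such pair $(A', B)$.

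To convert this pointwise bound into a statement about $e(\vec H^\ell[U])$, I would sample $A'$ uniformly among the $\alpha$-subsets of $U$ and then $B$ uniformly among the $\ell\alpha$-subsets of $U\setminus A'$. A direct computation shows that any fixed pair of distinct vertices $u, v \in U$ is split between $A'$ and $B$ with probability exactly $2\ell\alpha^2/(|U|(|U|-1))$, so by linearity of expectation
\[
\Ex\bigl[\elr_{\vec H^\ell}(A',B)\bigr] \;=\; \frac{2\ell\alpha^2}{|U|(|U|-1)}\cdot\bigl|E(\vec H^\ell[U])\bigr|.
\]
Combining this identity with the pointwise lower bound (which transfers to the expectation) and rearranging gives $|E(\vec H^\ell[U])| \ge p^{1/\ell}\binom{|U|}{2}/2$ as a count of directed edges, i.e.\ at least $p^{1/\ell}\binom{|U|}{2}/4$ underlying undirected edges, comfortably stronger than the required $p^{1/\ell}\binom{|U|}{2}/(2\ell+2)$ for every $\ell \ge 1$.

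The only point demanding care is the factor-two bookkeeping between $\elr$, which counts each antiparallel pair as two edges, and the underlying undirected edge count entering $e(\vec H^\ell[U])$; this is a routine loss and explains the mismatch between my constant $1/4$ and the weaker stated constant $1/(2\ell+2)$. No substantial obstacle is anticipated: the argument is essentially a one-line application of the hypothesis followed by a standard double count.
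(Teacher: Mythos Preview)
Your proposal is correct and is essentially the same averaging argument the paper gives: the paper samples a random $(\ell+1)\alpha$-subset $W\subset U$, splits it into parts of sizes $\alpha$ and $\ell\alpha$, applies the $\ell$-locally-dense hypothesis to lower bound $e(\vec H^\ell[W])$, and then averages back up to $U$. Your version samples $A'$ and $B$ directly rather than via $W$, and you are slightly more careful with the directed/undirected factor of two, but the two proofs are the same in substance.
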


\begin{proof}
  Let $W$ be a random subset of $U$ of size $(\ell+1)\alpha$.
  By splitting $W$ arbitrarily into two sets of size $\alpha$ and $\ell\alpha$ and applying the hypothesis that $\vec{G}$ is $\ell$-locally-dense, we see that
  \[
    e(\vec{H}^\ell[W]) \geq \frac{p^{1/\ell}\cdot\ell\alpha^2}{2} \geq \frac{\ell p^{1/\ell}}{(\ell+1)^2} \cdot \binom{|W|}{2} \geq \frac{p^{1/\ell}}{2(\ell+1)} \cdot \binom{|W|}{2}.
  \]
  On the other hand, the probability that an edge of $\vec{H}[U]$ is present in $\vec{H}[W]$ equals $\binom{|W|}{2} / \binom{|U|}{2}$. Therefore, $\mathbb{E}[e(\vec{H}[W])] =  e(\vec{H}[U]) \cdot \binom{|W|}{2} / \binom{|U|}{2}$, and the result follows.
\end{proof}

We are now ready to estimate the expected number of $\ell$-locally
dense extensions of an oriented graph $\vec{H}$. Recall that we need
to show that
$$
\Ex\left[\big|\cD_\ell(G, \vec{H})\big|\right]\leq
\exp\left(\alpha(\ell+2) p^{-\frac{1}{\ell}}(\log n)^2\right).
$$

\begin{proof}[Proof of Lemma~\ref{lemma:key}~\eqref{lemma:locally-dense}]
  Given the random graph $G=G(A,H,p)$ and an orientation $\vec{H}$, we
  want to find the expected size of $\cD_\ell(G, \vec{H})$, that is,
  the expected number of $\circ_{\ell+2}$-free, $\ell$-locally-dense
  extensions of $\vec{H}$.  We will follow the proof strategy
  from~\cite{CoKoMoMo20}.  For each orientation $\vec{G}$, let
  $\vec{T} = \vec{T}(\vec{G})$ be a minimal subset of $E(\vec{G})
  \setminus E(\vec{H})$ such that $\vec{G}$ is the unique
  $\circ_{\ell+2}$-free orientation of $G$ containing $\vec{T} \cup
  E(\vec{H})$. For $a \in A$, let
  \[ \vec{T}^+_a = \{ v \in V(H) : av \in \vec{T} \}. \]
  We claim that $\vec{T}^+_a$ is an independent set in
  $\vec{H}^\ell$. Indeed, suppose there are $x, y \in
  \vec{T}^+_a$ such that $xy \in E(\vec{H}^\ell)$. Let $\vec{T}' =
  \vec{T} \setminus \{ay\}$ and observe that every orientation
  containing $\vec{T}' \cup \{ya\} \cup \vec{H}$ contains a
  $\circ_{\ell+2}$ of the form $ax\vec{P}ya$ for some path $\vec{P}
  \subset \vec{H}$ of length $\ell$. Therefore, any $\circ_{\ell+2}$-free
  orientation of $\vec{G}$ containing $\vec{T}' \cup \vec{H}$ also
  contains the edge $ay$, contradicting the minimality of $\vec{T}$.
  By symmetry, the sets $\vec{T}^-_a$, defined analogously for $a \in A$, are also independent sets in $\vec{H}^\ell$. With this in mind, set
  \[ \mathcal{T}_1 = \big\{ \, \vec{T}(\vec{G}) \setminus (A \times A) : \vec{G} \in \mathcal{D}_\ell(G, \vec{H}) \, \big\}. \]
To bound $|\mathcal{T}_1|$ using the graph container lemma (Lemma~\ref{lemma:container}), set
  \[
    \beta = \frac{p^{1/\ell}}{2(\ell+1)},\qquad q = \frac{2(\ell+1) \log n}{p^{1/\ell}},\qquad R = (\ell+1)\alpha.
  \]
  Using Lemma \ref{lemma:edgesaveraging}, one can check that these constants satisfy the conditions of Lemma~\ref{lemma:container}, from which we obtain a family $\mathcal{C}$ of containers for the independent sets of $\vec{H}^\ell$. Therefore, since $T^+_a$ and $T^-_a$ are independent sets in $\vec{H}^\ell$ contained in $N_G(a)$ for each $a \in A$, we obtain
  \begin{equation}\label{eq:num_indep_sets}
    |\mathcal{T}_1| \leq \prod_{a \in A} \Big(\sum_{C \in \cC} 2^{|C \cap N_G(a)|}\Big)^2 \leq \prod_{a \in A} \Big( |\mathcal{C}| \cdot \sum_{C \in \cC} 4^{|C \cap N_G(a)|}\Big),
  \end{equation}
  where the last inequality follows by convexity.
  On the other hand, it is clear that the set $\mathcal{T}_2 = \{ \vec{T}(\vec{G}) \cap (A \times A) : \vec{G} \in \mathcal{D}_\ell(G, \vec{H}) \}$ satisfies
  \begin{equation}\label{eq:num_A}
    |\mathcal{T}_2| \leq 3^{|E(G) \cap (A \times A)|}.
  \end{equation}
  Since $\vec{T}(\vec{G})$ uniquely determines an orientation $\vec{G} \in \mathcal{D}_\ell(G, \vec{H})$ by definition, we can use linearity of expectation and the fact that the exponents in \eqref{eq:num_indep_sets}~and~\eqref{eq:num_A} depend on pairwise disjoint sets of edges of $G$ to obtain
  \begin{equation}\label{eq:dense_almost_done}
    \mathbb{E}\big[|\mathcal{D}_\ell(G, \vec{H})|\big] \leq \mathbb{E}\big[3^{|E(G) \cap (A \times A)|}\big] \cdot  \prod_{a \in A} \Big(|\mathcal{C}| \cdot \sum_{C \in \cC} \mathbb{E}\Big[ 4^{|C \cap N_G(a)|} \Big] \Big)
  \end{equation}
  By Lemma~\ref{lemma:container}, we have
  \[
    |\mathcal{C}| \leq \sum_{i=0}^q \binom{n}{i} \leq \left(\frac{en}{q}\right)^q \leq e^{q \log n}\, .
  \]
  Moreover, since  $|C| \leq R+q \leq 2R$ for every $C \in \mathcal{C}$, we can apply Lemma~\ref{lemma:chernoff} and obtain $$\mathbb{E}\big[3^{|E(G) \cap (A \times A)|}\big]\leq e^{3p|A|^2} \quad \text{and} \quad \mathbb{E}\Big[ 4^{|C \cap N_G(a)|}\Big]\leq e^{8pR}\, .$$
  Plugging these bounds in~\eqref{eq:dense_almost_done} and recalling that $|A| = \alpha/2$, $R=(\ell+1)\alpha$ and $q=2(\ell+1)p^{-1/\ell} \log n$, we have
  \begin{align*}
    \mathbb{E}\big[|\mathcal{D}_\ell(G, \vec{H})|\big] &\leq \exp\Big(3p|A|^2 + |A| (q \log n + 8pR)\Big) \\
                                                       &\leq \exp\Big(p\alpha^2 + \alpha (\ell+1)p^{-1/\ell} (\log n)^2 + 4p(\ell+1)\alpha^2\Big) \\
                                                       &\leq \exp\Big(\alpha(\ell + 2) p^{-1/\ell} (\log n)^2\Big),
  \end{align*}  
  where in the last inequality we used that $\ell p \alpha = o(p^{-1/\ell} (\log n)^2)$, which follows from $p\alpha=2^6 \log n$ and $p\leq \left(2^7 \ell \right)^{-\ell}$. This finishes the proof.
\end{proof}

\section{Concluding Remarks}

This paper fits into the effort of understanding the number of $\vec{H}$-free orientations of $G(n,p)$, and there is still much to be understood about this problem.
It is still an open problem to provide good estimates on $\log
D(G(n,p),\vec{H})$ when $\vec{H}$ is a strongly connected tournament
with at least $4$ vertices.
\begin{conj}\label{conj:tournaments}
  Let $\vec{H}$ be a strongly connected tournament with $k:=v(H)\geq 4$ and let $p\gg n^{-2/(k+1)}$.
  Then, with high probability, \[\log D(G(n,p),\vec{H})= \tilde{\Theta}\left(\frac{n}{p^{(k-1)/2}}\right).\]
\end{conj}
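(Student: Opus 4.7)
The plan is to attempt a proof by generalising the three-part strategy of Theorem~\ref{thm:main} to arbitrary strongly connected tournaments $\vec{H}$ on $k$ vertices. The lower bound should follow from a suitable extension of the construction achieving~\eqref{lemma:lower_bound}. Concretely, I would take a uniformly random ordering of $V(G(n,p))$, orient all edges ``forwards'' by default, and then in each pair of blocks of size roughly $n/p^{(k-1)/2}$ re-randomise orientations; choosing the block size so that the expected number of copies of $\vec{H}$ placed into the blocks is bounded by a constant, and then deleting a single edge from each bad copy to obtain a $\vec{H}$-free orientation, should yield $\exp\bigl(\widetilde{\Omega}(n/p^{(k-1)/2})\bigr)$ orientations. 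The threshold $p\gg n^{-2/(k+1)}$ is precisely the regime in which $G(n,p)$ is sufficiently dense for this block structure to be realisable and for the counting to carry through.

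For the upper bound I would introduce, in place of the $r$-th directed power $\vec{G}^r$, a family of auxiliary digraphs $\vec{G}^{F}$ indexed by the \emph{link tournaments} $F = \vec{H}-v$ for each $v\in V(\vec{H})$, where $(u,w)\in E(\vec{G}^F)$ whenever $uw\in E(G)$ extends to a partial copy of $F$ in $\vec{G}$ of a prescribed shape. Using these, I would define an analogue of $r$-locally dense (Definition~\ref{def:locally_dense}), stratified by the number of vertices of $\vec{H}$ already realised, so that the weakest level is a pseudorandomness statement about $G(n,p)$ alone (controllable by Chernoff, as in Lemma~\ref{lemma:1-locally-dense}) and the strongest level gives a lower bound of order $p^{(k-1)/2}$ on the density of some ``hypergraph of potential extensions''. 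The proof would then proceed, as in Theorem~\ref{thm:main}, by removing a single vertex $a$ and expressing $|\cD_1(G(n',p))|\leq z^{\alpha/2}|\cD_1(G(n'-\alpha/2,p))|$ via an analogue of Lemma~\ref{lemma:key}: the locally-dense case handled by the graph container lemma applied to the independent sets in the auxiliary digraph (giving the $p^{(k-1)/2}$-type density), and the locally-sparse cases handled by an encoding argument in the spirit of Proposition~\ref{prop:container-like}, where each edge added to the ``fingerprint'' $\vec{T}$ certifies many partial copies of $\vec{H}$.

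The main obstacle, I expect, is the combinatorial step of identifying the correct hierarchy of pseudorandomness conditions. For directed cycles, the length-$r$ paths form a canonical totally ordered family of sub-structures, and the telescoping between consecutive levels yields the exponents $p^{(\ell-r+1)/\ell}$ appearing in Definition~\ref{def:locally_dense}. For a general strongly connected tournament, there is no comparable linear hierarchy of sub-tournaments, and the analogue of ``$(r, B, X)$-sparse'' must somehow incorporate the whole automorphism structure of $\vec{H}$ and control every way in which an edge $av$ could complete a copy of $\vec{H}$. A secondary difficulty will be verifying the inductive bound $|B_{i-1}|\geq (r-1)\alpha$ (the analogue of the bookkeeping after~\eqref{eq:lower_r_paths}) when ``consuming'' partial copies of $\vec{H}$ rather than paths, because a single edge of $\vec{H}$ can participate in many partial extensions simultaneously. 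Getting the exponent to land exactly on $(k-1)/2$ will, I suspect, require either a clever choice of a specific sub-tournament hierarchy tailored to $\vec{H}$, or a genuinely new ingredient beyond the path-based containers used here.
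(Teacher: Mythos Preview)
The statement you are attempting to prove is \emph{Conjecture}~\ref{conj:tournaments}: it is explicitly presented in the paper as an open problem, and the paper contains no proof of it. There is therefore nothing to compare your proposal against. What you have written is not a proof but a candid research plan, and you yourself flag the decisive gaps.

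Those gaps are real. The machinery of the paper rests on the fact that the directed path $\vec{P}_r$ gives a \emph{linear} chain of substructures $\vec{P}_1 \subset \vec{P}_2 \subset \cdots \subset \vec{P}_\ell$, so that ``$r$-locally dense but $(r+1)$-locally sparse'' isolates a single edge whose orientation can be encoded cheaply (this is exactly what drives Claim~\ref{claim:maxdeg_T} and the telescoping in~\eqref{eq:lower_r_paths}). For a general strongly connected tournament on $k$ vertices there is no canonical chain of sub-tournaments, the link $\vec{H}-v$ depends on $v$, and a single edge $av$ can complete copies of $\vec{H}$ in up to $k(k-1)$ non-isomorphic ways. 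Your proposed auxiliary digraphs $\vec{G}^F$ indexed by link tournaments do not form a totally ordered hierarchy, so the case split ``level $r-1$ dense, level $r$ sparse'' that powers both parts of Lemma~\ref{lemma:key} has no obvious analogue. You correctly identify this as the main obstacle; it is precisely why the problem is still open. The lower bound side is more tractable --- the paper remarks that a more general form of the conjecture already appears in~\cite{CoKoMoMo20} --- but your sketch of it is also incomplete: orienting ``forwards'' outside blocks does not automatically kill copies of a strongly connected tournament the way it kills directed cycles, so the construction needs more care.
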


We remark that a more general version of Conjecture~\ref{conj:tournaments} appeared in~\cite{CoKoMoMo20}.
Coming back to the case studied in the present paper, there are also several open problems when $\vec{H}$ is a directed cycle.
We believe Theorem~\ref{thm:cycles} can be tightened as in the following.
\begin{conj}
  If $p\gg n^{-1+1/(k-1)}$ then, with high probability, \[\log D(G(n,p),\circ_k)= \Theta\left(\frac{n}{p^{1/(k-2)}}+n\log n\right).\]
\end{conj}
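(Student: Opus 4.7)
The conjecture sharpens Theorem~\ref{thm:cycles} by removing the polylogarithmic factor from the leading term $n/p^{1/(k-2)}$ and by adding the explicit lower-order term $n\log n$. I would attack the upper and lower bounds separately. For the lower bound, the $n/p^{1/(k-2)}$ term is already given by the construction in~\eqref{lemma:lower_bound}. For the $n\log n$ term, a natural route is to count acyclic orientations of $G(n,p)$, since every acyclic orientation is $\circ_k$-free; using Stanley's identity between the number of acyclic orientations of $G$ and $|\chi_G(-1)|$, together with standard chromatic polynomial estimates for $G(n,p)$, one expects to find $\exp(\Omega(n\log n))$ distinct acyclic orientations throughout the relevant range of $p$, delivering the desired lower bound.

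For the upper bound, the goal is to sharpen Theorem~\ref{thm:main} from $\exp\big(\Theta(n(\log n)^2/p^{1/\ell})\big)$ down to $\exp\big(O(n/p^{1/\ell}+n\log n)\big)$. The current proof loses two factors of $\log n$: one arises from the window size $\alpha=2^6(\log n)/p$ in Definition~\ref{def:locally_dense}, and a second from the parameter $q=2(\ell+1)(\log n)/p^{1/\ell}$ that enters the graph container lemma in the proof of Lemma~\ref{lemma:key}\eqref{lemma:locally-dense}. A first step would be to shave one logarithm by choosing $\alpha$ adaptively inside the inductive argument and by refining the container step so that its loss depends linearly rather than quadratically on $\log n$, exploiting the supersaturation of $(k-2)$-paths in locally-dense orientations.

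The main obstacle should be shaving both logarithms simultaneously, and in particular reducing the size of the fingerprint. The fingerprint produced by Algorithm~\ref{alg:cA} (\textsc{Encode}) in the proof of Proposition~\ref{prop:container-like} has total size $\Theta(\alpha\cdot p^{-1/\ell}\log n)=\Theta((\log n)^2/p^{1+1/\ell})$, so a tighter encoding seems to require either replacing the Kleitman--Winston style container with the full Hypergraph Container Theorem applied to the $(\ell+2)$-uniform hypergraph whose edges are the $\circ_{\ell+2}$-copies of $K_n$, or introducing a new structural reduction beyond the current online induction. Verifying the co-degree conditions demanded by the hypergraph container framework in this random setting is likely to be the main technical difficulty, and will be the bottleneck for resolving the conjecture in full.
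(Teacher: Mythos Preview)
This statement is a \emph{conjecture}, not a theorem; the paper does not prove it. Immediately after stating it, the authors write that ``the lower bound for this conjecture was proved in \cite{CoKoMoMo20}, but the upper bound is not known even in the case $k=3$.'' There is therefore no proof in the paper to compare your proposal against.

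On the lower bound, your plan is reasonable and close in spirit to what is already known: the $n/p^{1/(k-2)}$ term is indeed~\eqref{lemma:lower_bound}, and the $n\log n$ term can be obtained by counting acyclic orientations (one does not even need Stanley's identity---for $p\gg (\log n)/n$ a direct argument comparing permutations that induce the same acyclic orientation already yields $\exp(\Omega(n\log n))$ such orientations). The paper simply cites~\cite{CoKoMoMo20} for this.

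On the upper bound, what you have written is a research outline rather than a proof, and you should be clear that it does not resolve the conjecture. Your diagnosis of where the two $\log n$ factors enter (the scale $\alpha=\Theta((\log n)/p)$ in Definition~\ref{def:locally_dense} and the fingerprint size $q=\Theta((\log n)/p^{1/\ell})$ in the container step) is accurate, but neither of your suggested fixes---an adaptive choice of $\alpha$, or replacing the graph container lemma by the full hypergraph container theorem---is known to work; indeed the authors explicitly flag the $k=3$ case as open, where there is no ``locally sparse'' step at all and the entire loss comes from the Kleitman--Winston application. Removing that logarithm would already be new, and your proposal does not supply the missing idea.
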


The lower bound for this conjecture was proved in \cite{CoKoMoMo20}, but the upper bound is not known even in the case $k = 3$.
It would also be interesting to understand the behaviour of $\log D(G(n,p),\circ_k)$ as $k$ grows with $n$.
For instance, it is not clear to us what should happen when $k\gg \log n$.

\section{Acknowledgements}
The research that led to this paper started at WoPOCA 2019, and we thank the workshop organisers for a productive working environment. We would also like to thank Rob Morris for helpful comments on previous drafts of this paper, as well as the anonymous referees for their careful reading and valuable comments.

\bibliography{bibliografia}
\end{document}